\documentclass[reqno,oneside,10pt]{amsart}
\usepackage{geometry}               %
\usepackage{graphicx}
\usepackage[dvipsnames]{xcolor}
\usepackage{amssymb,amsmath,amsthm,amsfonts}
\usepackage{cancel}

\usepackage[shortlabels]{enumitem}

\usepackage{hyperref}
\usepackage[capitalise, nameinlink, noabbrev]{cleveref} 
\hypersetup{colorlinks=true, 
	linkcolor=blue,
	citecolor=red,
}
\allowdisplaybreaks

\usepackage[english]{babel}
\usepackage[utf8]{inputenc}

\usepackage{mathtools}
\usepackage{esint} 
\usepackage{dsfont}

\usepackage{comment} 

\usepackage[sc]{mathpazo}

\usepackage{tikz}
\usepackage{xfrac}
\newtheorem{theorem}{Theorem}[section]
\newtheorem{proposition}[theorem]{Proposition}

\newtheorem{lemma}[theorem]{Lemma}

\theoremstyle{definition}

\theoremstyle{remark}
\newtheorem{remark}[theorem]{Remark}
\numberwithin{equation}{section}

\newcommand{\eps}{\varepsilon}

\newcommand{\R}{{\mathbb{R}}}

\title{Fine structure of the two-phase Bernoulli free boundaries in 2D}
\author[L.~Ferreri, L.~Spolaor, B.~Velichkov]{Lorenzo Ferreri, Luca Spolaor, Bozhidar Velichkov}

\address{Lorenzo Ferreri \newline \indent
Classe di Scienze, Scuola Normale Superiore,\newline
\indent
Piazza dei Cavalieri 7, 56126 Pisa (Italy)}
 \email{lorenzo.ferreri@sns.it}

\address{Luca Spolaor \newline \indent
 	Department of Mathematics, UC San Diego, \newline \indent
 	AP\&M, La Jolla, California, 92093, USA}
 \email{lspolaor@ucsd.edu}

\address{Bozhidar Velichkov \newline \indent
Dipartimento di Matematica, Universit\`a di Pisa \newline \indent
Largo Bruno Pontecorvo, 5, 56127 Pisa, Italy}
\email{bozhidar.velichkov@unipi.it}

\begin{document}

\thanks{{\bf Acknowledgments.}
B.V. acknowledges the support of the European Research Council (ERC) via the project ERC FiRM - \it Fine structure and regularity of stationary and moving free boundaries \rm (grant agreement No. 101230705). L.S. acknowledges the support of the NSF Career Grant DMS 2044954. L.F. acknowledges the support of INdAM - GNAMPA}
\subjclass[2010] {
35R35. 
}

\begin{abstract} 
We prove that the branching set of a solution to a two-dimensional two-phase Bernoulli problem with constant coefficients is locally finite. We do this via a Weierstrass representation formula, which allows to transform the problem into a new geometric two-phase problem for capillary minimal surfaces. We also apply this method to the obstacle problem establishing a connection between the directional derivatives of solutions to the obstacle problem and the linear thin two-membrane problem.
\end{abstract}

\maketitle

\section{Introduction}
This paper is dedicated to the local structure of the free boundaries of solutions to the two-phase Bernoulli problem, that is functions $u\in C^{0,1}(B_1)\cap C^{1,\alpha}(\overline\Omega_\pm\cap B_1)$, whose supports $\Omega_\pm:=\{\pm u>0\}\cap B_1$ are disjoint sets with $C^{1,\alpha}$-regular free boundaries $\partial\Omega_\pm\cap B_1$, and
\begin{equation}\label{eq:twophase}
    \begin{cases}
        \Delta u^\pm=0&\text{in }\Omega_\pm,\\
        u^\pm =0&\text{on }\partial\Omega_\pm\cap B_1,\\
        |\nabla u^\pm|^2=\Lambda^\pm&\text{on }\partial\Omega_\pm\setminus\partial\Omega_\mp \cap B_1,\\
          |\nabla u^\pm|^2\ge\Lambda^\pm&\text{on }\partial\Omega_+\cap\partial\Omega_- \cap B_1,\\
    |\nabla u^+|^2-|\nabla u^-|^2=\Lambda^+-\Lambda^-&\text{on }\partial\Omega_+\cap\partial\Omega_- \cap B_1,
    \end{cases}
\end{equation}
for some positive real constants $\Lambda^+>0$ and $\Lambda^->0$. We remark that, at least for variational solutions, the smoothness assumption is justified by \cite{SpolaorVelichkov2019:EpiperimetricBernoulli2D} and  \cite{DePhilippisSpolaorVelichkov2021:TwoPhaseBernoulli}.\medskip

\noindent We investigate the structure of the coincidence set $\partial\Omega_u^+\cap\partial\Omega_u^-$ and in particular of the set of branching points 
\[
\mathcal B(u):=\partial\Omega_+\cap\partial\Omega_-\cap\overline{\text{\rm int}(\{u=0\})},
\]
where $\text{\rm int}(\{u=0\})$ denotes the interior part of the zero set $\{u=0\}$. We show that the set $\mathcal B(u)$ is locally finite, which concludes the analysis of the local structure of the planar two-phase free boundaries arising from \eqref{eq:twophase} completing the analysis from  \cite{AltCaffarelliFriedman1984:TwoPhaseBernoulli}, \cite{SpolaorVelichkov2019:EpiperimetricBernoulli2D,DePhilippisSpolaorVelichkov2021:TwoPhaseBernoulli}, and \cite{DePhilippisSpolaorVeluchkov2021:QuasiConformal2D} in the plane.
    \begin{theorem}\label{t:main} Let $u\in C^{0,1}(B_1)\cap C^{1,\alpha}(\overline\Omega_\pm\cap B_1)$, with $B_1\subset \R^2$, be a nontrivial solution of \eqref{eq:twophase}. Then $\mathcal B(u)$ is locally finite in $B_1$.
\end{theorem}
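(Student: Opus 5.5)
We argue by contradiction and by a local blow-up/analytic-continuation argument, following the strategy announced in the abstract: a Weierstrass-type representation that turns each phase into a minimal-surface-type object, so that the two-phase free boundary becomes the contact set of two analytic objects. Fix a point $x_0\in\mathcal B(u)$; it suffices to show that $x_0$ is isolated in $\mathcal B(u)$. Near $x_0$ both $\partial\Omega_+$ and $\partial\Omega_-$ are $C^{1,\alpha}$ curves tangent at $x_0$, with $\Omega_\pm$ on opposite sides.

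\emph{Step 1 (hodograph/Weierstrass transform).} On each $\Omega_\pm$, $u^\pm$ is harmonic, so $f_\pm:=\partial_x u^\pm-i\partial_y u^\pm$ is holomorphic. It is $C^{0,\alpha}$ up to $\partial\Omega_\pm$ and $|f_\pm|\ge\sqrt{\Lambda^\pm}>0$ near $x_0$ (by the boundary conditions and continuity). We then use the conformal change of variables $w_\pm=\int f_\pm\,dz=u^\pm+iv^\pm$, where $v^\pm$ is the harmonic conjugate. Since $f_\pm\neq0$, this is a local diffeomorphism. It maps $\Omega_\pm$ into a half-plane $\{\mathrm{Re}\,w>0\}$ and sends the free boundary to the line $\{\mathrm{Re}\,w=0\}$, parametrized by $t=v^\pm$. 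The inverse map $z=Z_\pm(w)$ is holomorphic on the half-plane, with $Z_\pm'=1/f_\pm$.

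\emph{Step 2 (boundary equations).} On the one-phase parts, $|Z_\pm'(it)|^2=1/\Lambda^\pm$. On the two-phase part, the curves coincide, so $Z_+(it)=Z_-(\phi(t))$ for a reparametrization $\phi$. The two-phase condition, together with the fact that $u^+$ and $u^-$ vanish on the same curve, makes the gradients parallel, with $|f_+|^2-|f_-|^2=\Lambda^+-\Lambda^-$. The inequalities $|f_\pm|^2\ge\Lambda^\pm$ say that the contact set is exactly where $|Z_+'|^{-2}-\Lambda^+=|Z_-'|^{-2}-\Lambda^-\ge0$. Writing $g_\pm:=\log Z_\pm'$, holomorphic in the half-plane, the one-phase condition becomes the linear condition $\mathrm{Re}\,g_\pm=-\tfrac12\log\Lambda^\pm$ on the line. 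This is a thin two-membrane/capillary-type problem for a pair of analytic objects: on the line, either $\mathrm{Re}\,g_\pm$ equals its constant, or the pair is glued.

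\emph{Step 3 (reflection and unique continuation).} Set $h_\pm:=\mathrm{Re}\,g_\pm+\tfrac12\log\Lambda^\pm$. These are harmonic in the half-plane, vanish on the one-phase arcs, and satisfy $h_\pm\le0$ on the contact arcs, with a transmission coupling there. Reflecting oddly across the line, each $h_\pm$ becomes harmonic across the one-phase arcs. Branching points are precisely endpoints of contact arcs, i.e.\ points where the positivity set $\{h_\pm<0\}$ on the line begins or ends. We then aim to show that a suitable combination is real-analytic along the line near $x_0$:
\[
H:=h_+\circ\Phi_+ + h_-\circ\Phi_-.
\]
Here $\Phi_\pm$ are the matching reparametrizations, and the combination is chosen so that the transmission condition cancels. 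Given analyticity, the set where $H$ vanishes while the one-phase condition holds on the line consists of isolated points unless $H\equiv0$. The case $H\equiv0$ is handled by unique continuation: it forces either a purely one-phase or a purely two-phase configuration near $x_0$, and in either case no branching points accumulate. Alternatively, a monotonicity-formula argument for this thin problem (Almgren frequency) can give homogeneous blow-ups with a discrete spectrum of frequencies. Via a Garofalo–Petrosyan style argument in 1D, this yields that contact arcs cannot accumulate at $x_0$.

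\emph{Main obstacle.} The hardest step is Step 3. The gluing map $\phi$ is itself unknown and only $C^{1,\alpha}$, so the transmission problem is nonlinear. Getting a genuinely linear thin-obstacle or two-membrane structure, where branching endpoints are zeros of an analytic or frequency-controlled function, is the crux. I would first try to use the free parametrization to choose coordinates in which $\phi=\mathrm{id}$: parametrize both phases by arclength of the common curve, so that the two representations live on the same half-line. The problem then reduces to a two-membrane problem with constant coefficients, to which Almgren monotonicity and a 1D discreteness argument apply.
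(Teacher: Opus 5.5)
There is a genuine gap, and it is in Step 3, where all the content of the proof lies. Steps 1--2 are a correct hodograph reformulation, but what they produce is not a thin two-membrane problem.

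\emph{Why the reformulation does not give a Signorini structure.} The ordering $\eta_+\ge\eta_-$ of the free boundaries is a condition on the positions $Z_\pm(it)$, i.e.\ on primitives of $e^{g_\pm}$. The Bernoulli conditions, by contrast, are Dirichlet conditions on $h_\pm$. On the contact arcs the two phases are coupled nonlinearly, $\Lambda^+(e^{-2h_+}-1)=\Lambda^-(e^{-2h_-}-1)$, and this coupling is evaluated at the two different parameters $t$ and $\phi(t)$. Here $|\phi'|=|\nabla u^+|/|\nabla u^-|$, which the free boundary conditions do not determine: they only fix $|\nabla u^+|^2-|\nabla u^-|^2$.

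\emph{Consequences for Step 3.} $H=h_+\circ\Phi_++h_-\circ\Phi_-$ is not defined off the contact arcs. There is no evident combination that cancels such a coupling. There is also no reason for analyticity across the line: already for the harmonic thin obstacle problem, solutions such as $\mathrm{Re}(z^{3/2})$ are not analytic at free boundary points. The classical two-dimensional argument works only because, with identity gluing and constant coefficients, $(\partial_z h)^2$ is real on the thin line and extends by reflection. Your $H\equiv0$ dichotomy and your frequency fallback both presuppose a linear Signorini structure that you have not produced.

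\emph{Why the proposed fix fails.} On one-phase arcs there is no common curve. $\partial\Omega_+$ and $\partial\Omega_-$ have different lengths over the same $x$-interval, so arclength parameters desynchronize after each gap. Moreover, a non-conformal reparametrization of the boundary does not extend to a holomorphic change of variables, so you would get solution-dependent variable coefficients, not constant ones.

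The missing idea is to use boundary coordinates whose density with respect to arclength is affine in $|\nabla u^\pm|^2$ (rather than linear in $|\nabla u^\pm|$, as in the hodograph). Then the transmission condition itself becomes the gluing condition.

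\emph{The paper's construction.} It takes Weierstrass--Enneper data $g^\pm=2\partial_z v^\pm$ and $f^\pm=\lambda^{\mp1}$, with $v^+=\Lambda_-^{-1/2}u^+$, $v^-=\Lambda_+^{-1/2}u^-$ and $\lambda=(\Lambda_+/\Lambda_-)^{1/2}$. Along the free boundary,
$d(\psi_1^\pm+i\psi_2^\pm)=\frac12 f^\pm(1+|\nabla v^\pm|^2)\,\bar\tau\,ds$.
Equality of the two densities on the contact set is exactly $|\nabla u^+|^2-|\nabla u^-|^2=\Lambda_+-\Lambda_-$. On one-phase arcs both densities equal $\frac{1+\lambda^2}{2\lambda}\,d\bar z$. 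Hence the two boundary maps coincide on the contact set and stay synchronized in the variable $\psi_1$ across gaps, with $\psi_2^+-\psi_2^-=-\frac{1+\lambda^2}{2\lambda}(\eta_+-\eta_-)\le 0$ at corresponding points.

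\emph{From there to a thin obstacle problem.} Writing the minimal surfaces $M^\pm$ as graphs $x_2=w^\pm(x_1,x_3)$ gives a two-membrane problem for the area functional, with constant capillary conormal data. The area integrand is convex and invariant under $t\mapsto -t$. Therefore the difference $d(s,t)=w^-(s,-t)-w^+(s,t)$ solves a linear divergence-form thin obstacle problem.

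\emph{The final step.} The coefficients of that problem are only $C^{0,\alpha}$. So the conclusion still needs the two-dimensional result of Ferreri--Spolaor--Velichkov for thin obstacle problems with H\"older coefficients, which works by quasiconformal reduction to the harmonic case. A constant-coefficient frequency argument does not apply directly.
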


Combining \cref{t:main} with the results in \cite{DePhilippisSpolaorVeluchkov2021:QuasiConformal2D}, in the case $\Lambda^\pm=1$ one can obtain a more precise local description of the free-boundary at branching points. We do not know whether \cref{t:main} holds in the case of analytic weights $\Lambda^\pm \in C^\omega$, but we can easily construct a counterexample in the case $\Lambda^\pm\in C^\infty \setminus C^\omega$ (see Section \ref{ss:counter}).

\begin{theorem}\label{thm:smoothnotanalytic}
    Let $K\subset [-1,1]$ be compact. Then there exists $u$ solution to \eqref{eq:twophase}, with coefficients $\Lambda^\pm\in C^\infty(B_1)\setminus C^\omega(B_1)$ depending on $K$, such that $\mathcal B(u)=\partial K$.
\end{theorem}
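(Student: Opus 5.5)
We build the counterexample explicitly from one-dimensional pieces, taking the simplest geometry in which the two phases touch along a segment of the $x$-axis. The positive phase is $\{y>0\}\cap B_1$ and the negative phase lives below the axis, with the negative phase detaching from the axis exactly on the set $K$. Concretely, choose $g\in C^\infty(\R)$ with $g\ge 0$ and $\{g=0\}=[-1,1]\setminus \text{int}(K)$, or more conveniently $\{g>0\}=\text{int} K$ with $g$ vanishing to infinite order on $\partial K$ (the standard construction $g=\sum_j c_j\psi_j$ over the countably many component intervals of $\text{int}K$, with flat bumps $\psi_j$ and fast-decaying $c_j$). Set
\[
\Omega_+=\{y>0\}\cap B_1,\qquad \Omega_-=\{y<-g(x)\}\cap B_1 .
\]
Then $\partial\Omega_+\cap\partial\Omega_-=\{y=0,\ g(x)=0\}$, and the zero set $\{-g(x)\le y\le 0\}$ has interior $\{-g<y<0\}$, whose closure meets the contact set exactly at $\partial K$ (on the axis). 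Hence $\mathcal B=\partial K$, after identifying $K$ with $K\times\{0\}$.

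Next we pick the functions and read off the coefficients rather than prescribing them. Take $u^+=y$ in $\Omega_+$, and let $u^-$ be a positive harmonic function in $\Omega_-$ vanishing on $\{y=-g(x)\}$. Since $y=-g(x)$ is a $C^\infty$ graph, we can take $u^-=-(y+g(x))+h$ with $h$ a harmonic correction, or, more robustly, the solution of a Dirichlet problem in a smooth domain with boundary values $0$ on the graph. Boundary Schauder theory then gives $u^-\in C^\infty(\overline{\Omega_-})$ locally, and Hopf's lemma gives $|\nabla u^-|>0$ on the graph. Define $\Lambda^+:=|\nabla u^+|^2=1$ on $\partial\Omega_+$, and on $\partial\Omega_-$ set $\Lambda^-:=|\nabla u^-|^2$. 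We then extend both coefficients to smooth positive functions on $B_1$. On the contact set the equality $|\nabla u^+|^2-|\nabla u^-|^2=\Lambda^+-\Lambda^-$ holds by construction, and the inequalities in \eqref{eq:twophase} are equalities.

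The remaining point is to ensure $\Lambda^\pm\notin C^\omega$. If both extensions were analytic, we would obtain a two-phase solution with analytic data whose branching set is $\partial K$. Rather than rely on an abstract argument, we choose the extension of $\Lambda^+$ by adding a non-analytic smooth function vanishing on $\{y=0\}$, for instance $\Lambda^+=1+y\,\phi(x)$ with $\phi$ smooth but not analytic. The same modification applied to $\Lambda^-$ away from its prescribed trace makes $\Lambda^-$ non-analytic as well. This trivially satisfies the requirement.

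The main technical obstacle is regularity at $\partial K$ when $K$ is a wild compact set (e.g. a Cantor set). There the graph of $-g$ has infinitely many bumps accumulating at $\partial K$, and one must check that $u^-$ is $C^{1,\alpha}$ (indeed $C^\infty$) up to the boundary uniformly. This follows because $g\in C^\infty$ globally, so the boundary is a single smooth graph and Schauder estimates apply without reference to the structure of $K$. A second, minor point is the case $K\cap\partial[-1,1]\neq\emptyset$: there we shrink slightly or work in $B_1$ with the graph defined on all of $\R$.
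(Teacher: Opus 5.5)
\textbf{Gap: the detachment set is reversed, so the claim $\mathcal B(u)=\partial K$ fails in general.}

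With $\{g>0\}=\mathrm{int}\,K$, the contact set is $\{(x,0): g(x)=0\}$ and $\mathrm{int}\{u=0\}=\{-g(x)<y<0\}$. The closure of this interior meets the contact set exactly over $\partial\{g>0\}$. So your construction gives $\mathcal B(u)=\partial(\mathrm{int}\,K)\times\{0\}$, which equals $\partial K\times\{0\}$ only when $K=\overline{\mathrm{int}\,K}$.

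This breaks precisely the cases that make the theorem interesting. For a Cantor set, a finite set, or $K=\{0\}\cup\{1/n\}_{n\ge 1}$, you have $\mathrm{int}\,K=\emptyset$. Then $g\equiv 0$, the phases are the two half-discs, $\mathrm{int}\{u=0\}=\emptyset$, and $\mathcal B(u)=\emptyset\neq\partial K$. Your remark that for a Cantor set ``the graph of $-g$ has infinitely many bumps accumulating at $\partial K$'' describes the components of the complement of $K$, not of its interior. That is where the mix-up lies.

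\textbf{Fix.} The phases must touch on $K$ and separate on its complement. Take $g\in C^\infty(\R)$ with $g\ge 0$, $g$ small, flat on $K$, and $\{g>0\}=\R\setminus K$. You can build it from flat bumps on the countably many components of $\R\setminus K$, with rapidly decaying coefficients. Then the contact set is $K\times\{0\}$, and inside $B_1$
\[
\mathcal B(u)=\big(\partial(\R\setminus K)\cap K\big)\times\{0\}=\partial K\times\{0\}.
\]
This matches the paper, where the phases touch exactly over the zero set $\{f=0\}$ of the nonpositive function $f$, which plays the role of $K$.

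\textbf{Comparison once fixed.} With this correction your argument is complete, and it is a more elementary route than the paper's.
\begin{itemize}
\item The paper transports $y^+$ through the conformal map $(\bar v^+,v^+)$ of $\{y>f(x)\}$ onto the half-plane and reflects oddly. It reads off $\Lambda^\pm=|\nabla v^\pm|^{-2}\circ\Psi_{\pm f}^{-1}$, which has $\Lambda^+=\Lambda^-$ on the contact set by symmetry.
\item You keep the positive phase flat and get the negative phase from a Dirichlet problem below a $C^\infty$ graph, using boundary Schauder estimates and Hopf. You then define $\Lambda^\pm$ as the squared gradients on $\partial\Omega_\pm$.
\end{itemize}
Your version avoids the change of variables. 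So the branching set is literally $\partial K\times\{0\}$, rather than its image under the boundary map $x\mapsto \bar v^+(x,0)$. You also correctly observe that no matching $\Lambda^+=\Lambda^-$ is needed, since all transmission conditions hold with equality.

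Finally, your modification $\Lambda^+=1+y\phi(x)$ to force non-analyticity is legitimate for the statement as written. The weights enter only through their traces on $\partial\Omega_\pm$.
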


Finally, using the techniques of this paper, we revisit a celebrated result by Sakai in \cite{SakaiSchwarzFunction,SakaiBranchPointsObstacle} for solutions of the two-dimensional obstacle problem, by proving the following. We restrict to the case of right hand side equal to $1$ and observe that the case of analytic right hand side is equivalent to the constant one thanks to the last section in \cite{SakaiBranchPointsObstacle}. Notice that the $C^2$ regularity assumption in the next theorem is justified by \cite{CaSh}, Section 2, (f), case 1 (see Remark \ref{rmk:cash}).

\begin{theorem}\label{t:obst}
    Let $u\in C^{1,1}(B_1)\cap C^{2}(\overline{\Omega})$, with $B_1\subset \R^2$ and $\Omega=\{u>0\}\cap B_1$ be a nonnegative solution to 
    \[
    \begin{cases}
        \Delta u = 1 &\text{in }\Omega\\
        u=0=|\nabla u| &\text{on }\partial \Omega\cap B_1\,.
    \end{cases}
    \] 
    Then the set of branching points of the free boundary of $u$, defined by
    \[
    \mathcal B(u):=\left\{x\in\partial \Omega\cap B_1\,:\,\lim_{r\to0}\frac{|B_r(x)\cap \Omega|}{|B_r(x)|}=1\right\}\cap \overline{{\rm int}(\{u=0\})}\,,
    \]
    is locally finite.
\end{theorem}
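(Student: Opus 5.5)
We plan to mirror the strategy announced for \cref{t:main}: a Weierstrass-type representation turns the free boundary problem into a problem for harmonic/holomorphic data on a fixed domain. There, branching points become zeros of an analytic object, so they cannot accumulate.

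\textbf{Step 1: the hodograph variables.} Set $f:=\partial_x u - i\,\partial_y u = 2\partial_z u$ in $\Omega$. Since $\Delta u=1$, we have $\partial_{\bar z} f = \tfrac12\Delta u=\tfrac12$. Hence $g(z):=f(z)-\tfrac12\bar z$ is holomorphic in $\Omega$. Because $u\in C^2(\overline\Omega)$ and $u=|\nabla u|=0$ on $\partial\Omega$, the function $g$ extends continuously up to $\partial\Omega\cap B_1$ with $g=-\tfrac12\bar z$ there. This is exactly Sakai's Schwarz function relation: $S(z):=-2g(z)$ is holomorphic in $\Omega$ and satisfies $S=\bar z$ on $\partial\Omega$.

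\textbf{Step 2: directional derivatives and the thin two-membrane problem.} Fix a branching point $x_0$, which we may take to be $0$. At such a point the blow-up is a full-plane quadratic polynomial $\tfrac12(x\cdot e)^2$, since the density of $\Omega$ is $1$. We consider the directional derivatives $v=\partial_e u$ with $e$ transversal. Near $0$, the set $\{u=0\}$ is a thin set along the line $\{x\cdot e=0\}$, and $v$ is harmonic in $\Omega$ and vanishes, together with $\nabla u$, on $\{u=0\}$. The aim is to show that $v^\pm$ solve a linear thin two-membrane problem: two harmonic functions above and below the line, with ordered traces. By the representation of Step 1, $v=\mathrm{Re}(e\,(g+\tfrac12\bar z))$ up to conjugation. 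We then pass to the variable $w$ given by the hodograph map $z\mapsto \nabla u(z)$, which is a local quasi-conformal homeomorphism near $0$ on each side by $C^2$ regularity and nondegeneracy of $\partial_{ee}u$. This straightens the free boundary onto a segment of the line. In these coordinates the function $g\circ(\nabla u)^{-1}$ becomes holomorphic on both half-discs. The contact set $\{u=0\}$ corresponds to the set where the two one-sided boundary values coincide, and branching points correspond to endpoints of intervals where they differ.

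\textbf{Step 3: finiteness via analyticity.} On the straightened segment, the difference of the two traces $h:=G^+-G^-$ extends, by Schwarz reflection on the non-contact part and by the matching condition on the contact part, to an object whose real part is real analytic along the segment. Concretely, $G^+(t)-\overline{G^-(t)}$ is the boundary value of a holomorphic function defined in a full neighborhood. Branching points are boundary points of the set where this analytic function vanishes on the line. If it vanished identically, then $\{u=0\}$ would have empty interior near $0$, or $u$ would be trivial. Otherwise the zero set in a compact subinterval is finite, and so $\mathcal B(u)$ is finite near each point. Branching points that are not in the density-one regime are excluded by definition, and regular points are isolated from $\mathcal B(u)$ by the Caffarelli regularity theory. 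By compactness, this gives local finiteness.

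\textbf{Main obstacle.} The delicate step is Step 2: the hodograph/Weierstrass change of variables near a point where $D^2u$ has rank one, with $\partial_{ee}u\neq0$ but degenerate in the other direction. Here we must verify that the map is a genuine homeomorphism onto a half-disc up to the boundary, and that the contact condition transforms into a holomorphic matching across the line. I would first try to use $u\in C^2(\overline\Omega)$ together with the explicit identity $\partial_{\bar z}f=\tfrac12$ to control the Jacobian. This is the point where the assumption justified by \cite{CaSh} enters.
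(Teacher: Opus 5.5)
There is a genuine gap in Step 2, at the change of variables you yourself flag as the main obstacle. The hodograph map $z\mapsto\nabla u(z)$ cannot do what you need.

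Since $u=|\nabla u|=0$ on $\partial\Omega$, this map sends the whole free boundary near $0$ to the single point $0$: both arcs $\partial\{\pm u_y>0\}$, contact and non-contact parts alike. So it cannot straighten anything onto a segment, and the contact set cannot become ``the set where the two one-sided traces coincide''. Its Jacobian is $\det D^2u$, which vanishes at the branching point, where $D^2u(0)=e\otimes e$, and on the whole regular part, where $D^2u=\nu\otimes\nu$. In complex form, with $e=e_2$, one has $\partial_z(u_x+iu_y)=\tfrac12$ and $\partial_{\bar z}(u_x+iu_y)(0)=-\tfrac12$. The Beltrami coefficient therefore has modulus one at $0$, and the map is not quasi-conformal there; nondegeneracy of $\partial_{ee}u$ controls only one direction. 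Finally, even if it were a homeomorphism, it is not conformal, so $g\circ(\nabla u)^{-1}$ would not be holomorphic. Hence the claim in Step 3 that $G^+-\overline{G^-}$ is the boundary value of a holomorphic function has no basis as written.

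The missing idea is to keep $u_y$ as the ``height'' coordinate but replace $u_x$ by $x-u_x$. That is, use $T=(x-u_x)+iu_y$ from Lemma~\ref{l:obstacle-S-and-T}. In your notation $T=\tfrac12\big(z+S(z)\big)$, with $S=\bar z-2\partial_z u$ your Schwarz function. This map has the properties you need:
\begin{itemize}
\item $T$ is holomorphic in $\Omega$, with $T'=u_{yy}+iu_{xy}$. Its Jacobian is $|\nabla u_y|^2$, and $T'(0)=1$ when the blow-up is $\tfrac12y^2$.
\item $u_y=\text{Im}\,T$, so $T$ maps each curve $\partial\{\pm u_y>0\}$ into the real axis. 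These curves are $C^1$ graphs by Lemma~\ref{l:obstacle-smooth-parametrization}, which is where $C^2(\overline\Omega)$ enters.
\item $T=x$ on $\partial\Omega$.
\end{itemize}
$T$ must be inverted separately on each closed side $\overline{\{\pm u_y>0\}}$, because $(x,\eta_+(x))$ and $(x,\eta_-(x))$ have the same image $(x,0)$.

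Now transport the companion function $\tfrac1{2i}\big(z-S(z)\big)=(y-u_y)-iu_x$, which equals $y$ on $\partial\Omega$. This gives holomorphic functions on the two half-discs whose traces on the line are the real values $\eta_\pm(s)$. The paper takes minus their real parts, $w^\pm$, obtains $w^+\le w^-$ on $\{t=0\}$ and $\partial_t w^\pm=0$ off the contact set, and concludes through the linear thin two-membrane problem. Alternatively, because the traces are real, each transported function extends by Schwarz reflection. This is the correct form of your Step 3, and it shows that $\eta_+-\eta_-$ is real analytic in $s$. Without this choice of coordinates your argument does not go through.
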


Branching (or flat) points are singular points of geometric variational problems where the tangent cone to the geometric solution is a plane (thus regular) but counted with multiplicity strictly larger than one. The main issue is to study the dimension of flat points were the density is not locally constant, that is a geometric unique continuation type problem. Such results are very scarce in the literature. The situation which is better understood is the interior regularity for area minimizing currents in any codimension, both integer and mod $p$ (see \cite{Almgren2000:AlmgrensBigRegularityPaper,Sheldon, DeLellisSpadaro2016:AreaMinimizingCurrents1LpEstimates,DeLellisSpadaro2016:AreaMinimizingCurrents2CenterManifold,DeLellisSpadaro2016:AreaMinimizingCurrents3BlowUp, DeLellisSpadaroSpolaor3:BlowUp,Dep1,Dep2}). In the context interior branching points for free boundary problems the only available results are for the two dimensional obstacle problem (see \cite{SakaiBranchPointsObstacle,SakaiSchwarzFunction}) and for the thin obstacle problem in every dimension (see \cite{Lewy,GarofaloPetrosyan2009:NewMonotonicityThinObstacle,FocardiSpadaro2018:MeasureFreeBdThinObstacle}). The major difference between these two situations is that for area minimizing currents the problem is local, while for free boundary problems the problem is non-local: that is the free boundaries do not solve an equation themselves. In this respect, a more similar situation is the investigation of boundary branching points, see for instance \cite{FerreriSpolaorVelichkov2024:BoundaryBranchingOnePhaseBernoulli,DeLellisDePhilippisHirschMassaccesi2023,DLboundary,FlescherResende2024}.

In this respect, the closest known result to \cref{t:main} is that of Sakai in \cite{SakaiSchwarzFunction,SakaiBranchPointsObstacle}, and in fact, although fundamentally different, both our techniques and Sakai's make heavy use of complex analysis tools and thus are fundamentally two-dimensional. 

Our main idea is inspired by the work \cite{HauswirthHeleinPacard} and \cite{Traizet}, where the authors draw a beautiful parallel between solutions of the one-phase Bernoulli problem and free boundary minimal surfaces in the upper half-space. In Section \ref{ss:gp} we revisit this idea to show that \eqref{eq:twophase} is connected to a \emph{new two-phase capillary problem} by constructing compatible Weierstrass parametrization from the solution of the two-phase Bernoulli problem. 
In Section \ref{s:nonlinear-two-membrane} we transform the geometric problem into a nonlinear thin two-membrane problem for the area functional by simply choosing suitable graphical coordinates. Finally in Section \ref{s:conclusion} we leverage the main result of \cite{FerreriSpolaorVelichkov2024:FineStructureThinTwoMembrane2D} to reach the conclusion.   

We hope that the techniques introduced in this paper could be useful in the investigation of similar issues for capillary problems and boundary regularity issues for area minimizing currents.

\section{A two-phase capillary problem}\label{ss:gp}

In this section we observe that in dimension $2$ solutions to the two-phase problem induce minimal surfaces that solve a two-phase capillary problem.

\begin{theorem}[Two-phase capillary problem]\label{thm:main-weierstrass}
Given $\Lambda_\pm>0$, there is $\eps>0$ such that the following holds. 
Suppose that $u:B_2\to\R$ is a solution of \eqref{eq:twophase} in $B_2$ which is $\eps$-flat in the sense that 
\[
|u(x,y)-(\Lambda_+^{\sfrac12}y_+-\Lambda_-^{\sfrac12}y_-)|\le \eps\quad\text{for all}\quad (x,y)\in B_2.
\]
There exist functions $\psi_1^\pm,\psi_2^\pm \colon \overline\Omega_\pm\cap B_{1}\to \R$ such that the following properties hold. Define  
\[
\Omega_\pm:=\{\pm u>0\}\ ,\quad v^+:=\Lambda_{-}^{-\sfrac12}u^+\ ,\quad v^-:=\Lambda_{+}^{-\sfrac12}u^-\ ,\quad \lambda:=\left(\Lambda_+/\Lambda_-\right)^{\sfrac12}\,,
\]
and let 
\[
\Psi^\pm=(\psi_1^\pm,\psi_2^\pm, \lambda^{\mp1}v^\pm)\colon \overline{\Omega_{\pm}}\to \R^3\cap\{\pm x_3\geq 0\}
\]
and $M^\pm=\Psi^\pm(\overline \Omega_\pm)$. Set $\Gamma_\pm=B_2\cap \partial  \Omega_\pm$ and $\partial M^\pm=\Psi^\pm(\Gamma_\pm)$. 
\begin{enumerate}
    \item $\partial M^\pm \subset \{x_3=0\}$, and moreover the following boundary conditions hold:\\ on the two-phase free boundary we have
    \begin{equation}\label{eq:TPcoincidence}
    \Psi^+(x,y)=\Psi^-(x,y)\in \partial M^+\cap \partial M^-\qquad\text{for all}\qquad (x,y)\in \Gamma_+\cap \Gamma_-,
    \end{equation}
    while on the one-phase part
    we have 
    \begin{align}\label{eq:OPspeed}
    d(\psi_1^\pm+i\psi_2^\pm)=\frac{(1 + \lambda)}{2\lambda} d\bar{z}\qquad\text{on}\qquad ( \Gamma_+\setminus\Gamma_-)\cup (\Gamma_-\setminus\Gamma_+)\,.
 \end{align} 
    \item The following capillary type problem holds
    \[
    \begin{cases}
        H_{M^\pm}=0& \text{on }\  M^{\pm}\cap \{\pm x_3>0\}\\
        \nu^\pm\cdot e_3= \pm \frac{1-\lambda^2}{1+\lambda^2}& \text{on }\ \partial M^\pm\setminus \partial M^\mp\\
          \nu^\pm\cdot e_3\le \pm \frac{1-\lambda^2}{1+\lambda^2}& \text{on }\ \partial M^+\cap \partial M^-\\
        {\lambda}\left(1 + e_3\cdot\nu^+\right)=\frac{1}{\lambda}\left(1 + e_3\cdot\nu^-\right)& \text{on }\ \partial M^+\cap \partial M^- 
    \end{cases}
    \]
    where $H_M$ denotes the mean curvature of $M$.
\end{enumerate}
\end{theorem}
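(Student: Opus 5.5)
The plan is to build $\Psi^\pm$ from an explicit Weierstrass representation whose third coordinate is prescribed to be $\lambda^{\mp1}v^\pm$. The boundary conditions will then follow by evaluating the Weierstrass data along $\Gamma_\pm$.

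\emph{Weierstrass data.} In each phase $\Omega_\pm$ the function $h^\pm:=\lambda^{\mp1}v^\pm$ is harmonic, so $\varphi_3^\pm:=\partial_z h^\pm$ is holomorphic. We want a conformal minimal immersion $X$ with $\partial_z X=(\varphi_1,\varphi_2,\varphi_3)$ and $\varphi_1^2+\varphi_2^2+\varphi_3^2=0$. Writing $(\varphi_1+i\varphi_2)(\varphi_1-i\varphi_2)=-\varphi_3^2$, we make the simplest choice: $\varphi_1^\pm-i\varphi_2^\pm\equiv a_\pm$, a real positive constant to be fixed, and hence $\varphi_1^\pm+i\varphi_2^\pm=-(\varphi_3^\pm)^2/a_\pm$. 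Since $X$ is real, $dX_k=\varphi_k\,dz+\overline{\varphi_k}\,d\bar z$, which gives
\[
d(\psi_1^\pm+i\psi_2^\pm)=-\frac{(\varphi_3^\pm)^2}{a_\pm}\,dz+a_\pm\,d\bar z .
\]
This form is closed because $(\varphi_3^\pm)^2$ is holomorphic. By $\eps$-flatness and the $C^{1,\alpha}$ regularity, $\Gamma_\pm\cap B_{3/2}$ are $C^{1,\alpha}$ graphs close to $\{y=0\}$. Hence $\Omega_\pm\cap B_{3/2}$ (intersected with a suitable smaller domain) are simply connected, and we may integrate to get $\psi^\pm$ on $\overline\Omega_\pm\cap B_1$. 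Each is unique up to an additive constant.

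The map $\Psi^\pm$ is a conformal harmonic immersion: its conformal factor is comparable to $a_\pm+|\varphi_3^\pm|^2/a_\pm>0$. So $H_{M^\pm}=0$ on the interior. Moreover $\psi^\pm=a_\pm\bar z+O(\eps)$-perturbations in $C^1$ (flatness gives $\nabla u$ close to constant in $C^{0}$ by $C^{1,\alpha}$ estimates), so $\Psi^\pm$ is an embedding. Since $h^\pm=0$ on $\Gamma_\pm$, we get $\partial M^\pm\subset\{x_3=0\}$, and the sign of $x_3$ on $M^\pm$ is $\pm$.

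\emph{Boundary identities.} Along $\Gamma_\pm$, the vanishing of $h^\pm$ gives $\partial_z h^\pm\,dz\in i\R$ for a tangent vector $dz$. Therefore
\[
(\varphi_3^\pm)^2dz^2=-\tfrac14|\nabla h^\pm|^2|dz|^2,
\]
and the tangential restriction of $d\psi^\pm$ equals
\[
\Big(a_\pm+\frac{|\nabla h^\pm|^2}{4a_\pm}\Big)\,d\bar z .
\]
The Gauss map is $g^\pm=\varphi_3^\pm/a_\pm$, which gives
\[
e_3\cdot\nu^\pm=\pm\frac{|g^\pm|^2-1}{|g^\pm|^2+1},\qquad |g^\pm|=\frac{|\nabla h^\pm|}{2a_\pm},
\]
with the orientation chosen so that the signs match the statement. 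On the one-phase part, $|\nabla u^\pm|^2=\Lambda^\pm$, so $|\nabla h^\pm|$ is an explicit constant. We then choose $a_\pm$ so that:
\begin{itemize}
\item[(i)] the coefficient $a_\pm+|\nabla h^\pm|^2/(4a_\pm)$ equals $(1+\lambda)/(2\lambda)$, which is \eqref{eq:OPspeed};
\item[(ii)] $|g^\pm|^2=\lambda^{\pm2}$ (or its reciprocal), which yields $\nu^\pm\cdot e_3=\pm\frac{1-\lambda^2}{1+\lambda^2}$.
\end{itemize}
With the normalizations $v^+=\Lambda_-^{-1/2}u^+$ and $v^-=\Lambda_+^{-1/2}u^-$ we have $|\nabla h^\pm|^2=1$ there, and I expect a single common choice such as $a_\pm=\frac{1}{2\lambda}$ or $a_\pm=\frac12$ to work. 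Checking the arithmetic is routine.

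On the two-phase part, $|\nabla u^\pm|^2\ge\Lambda^\pm$ gives $|g^\pm|$ on the monotone side of the threshold, hence the inequality $\nu^\pm\cdot e_3\le\pm\frac{1-\lambda^2}{1+\lambda^2}$. The relation $|\nabla u^+|^2-|\nabla u^-|^2=\Lambda^+-\Lambda^-$, rewritten in terms of $|g^\pm|$ through $1+e_3\cdot\nu^\pm=\frac{2|g^\pm|^2}{1+|g^\pm|^2}$ (up to sign conventions), should become exactly $\lambda(1+e_3\cdot\nu^+)=\lambda^{-1}(1+e_3\cdot\nu^-)$.

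\emph{Coincidence \eqref{eq:TPcoincidence}.} This is the step I expect to be the main obstacle. We need $\Psi^+=\Psi^-$ pointwise on $\Gamma_+\cap\Gamma_-$, not merely equal speeds. The third components agree since both vanish. For the horizontal components, compare the tangential derivatives of $\psi^+$ and $\psi^-$ along any arc of $\Gamma_+\cap\Gamma_-$. By the previous paragraph these are $\big(a_\pm+\frac{|\nabla h^\pm|^2}{4a_\pm}\big)d\bar z$, and the Bernoulli jump relation together with the choice of $a_\pm$ and $\lambda$ must force the two coefficients to coincide. This is an algebraic identity I will verify, and it may dictate the final choice of $a_\pm$.

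The subtle part is that $\Gamma_+\cap\Gamma_-$ is in general not connected: it is interrupted by one-phase arcs where $\Gamma_+$ and $\Gamma_-$ separate, and a priori there could be infinitely many such arcs. To handle this, fix the integration constants at one common point $p_0\in\Gamma_+\cap\Gamma_-$ (or at the point above $0$ if there is none). Then parametrize both $\Gamma_\pm$ as graphs over the $x$-axis, $\Gamma_\pm=\{y=f_\pm(x)\}$ with $f_+\ge f_-$. Integrating \eqref{eq:OPspeed} on the one-phase parts and the matched speed on the two-phase part, we get
\[
\psi^\pm(x,f_\pm(x))=\psi^\pm(p_0)+\frac{1+\lambda}{2\lambda}\int \overline{d z}
\]
along each graph, plus a correction supported on the coincidence set. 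That correction is identical for $+$ and $-$. The remaining term is $\frac{1+\lambda}{2\lambda}\overline{(x+if_\pm(x))}$, which agrees at points where $f_+=f_-$.

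So the argument reduces to showing that, on the coincidence set, the tangential speeds computed from the two sides coincide as measures along the common graph, including at accumulation points of one-phase arcs. This follows from the $C^{1,\alpha}$ regularity of $\overline\Omega_\pm$, which makes the tangential derivative continuous up to the boundary.
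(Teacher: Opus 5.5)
Your setup is the paper's own: your $a_\pm$ is $\frac12 f^\pm$, and your $\varphi_3^\pm$ is $\frac12 f^\pm g^\pm$ with $g^\pm=2\partial_z v^\pm$. The gap is the step you call routine, the choice of $a_\pm$. Your expectation that one common value works is false for $\lambda\neq1$.

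On $\Gamma_+\cap\Gamma_-$ the conditions in \eqref{eq:twophase} say exactly that $|\nabla u^\pm|^2=\Lambda_\pm+s$ for some $s\ge0$. Hence $|\nabla h^\pm|^2=1+s/\Lambda_\pm$, and the tangential speeds are $\sigma_\pm=a_\pm+\frac{1+s/\Lambda_\pm}{4a_\pm}$. Your coincidence argument needs $\sigma_+=\sigma_-$ on the one-phase arcs (where $s=0$) and on the contact set (for arbitrary $s\ge0$). This forces $a_++\frac1{4a_+}=a_-+\frac1{4a_-}$ and $a_+\Lambda_+=a_-\Lambda_-$. For $\lambda\neq1$ the unique solution is $a_+=\frac1{2\lambda}$, $a_-=\frac\lambda2$, which is precisely the paper's choice $f^+=1/\lambda$, $f^-=\lambda$. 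With a common $a$, the speeds differ by $\frac{s}{4a}(\Lambda_+^{-1}-\Lambda_-^{-1})$ at every two-phase point with $s>0$, so \eqref{eq:TPcoincidence} fails.

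The resulting one-phase constant is $\frac{1+\lambda^2}{2\lambda}$. The $\frac{1+\lambda}{2\lambda}$ in \eqref{eq:OPspeed} is a misprint: the paper's own computation, and its later use in the nonlinear thin two-membrane section, give $\frac{1+\lambda^2}{2\lambda}$. As literally written, your condition (i) has no solution for $\lambda>1$, since $a+\frac1{4a}\ge1>\frac{1+\lambda}{2\lambda}$.

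The orientation must also be fixed, not left to ``sign conventions''. With these $a_\pm$ one has $|g^+|=\lambda$ and $|g^-|=\lambda^{-1}$ on the one-phase arcs. The statement then requires the same choice on both sheets, $e_3\cdot\nu^\pm=\frac{1-|g^\pm|^2}{1+|g^\pm|^2}$. Your convention $e_3\cdot\nu^\pm=\pm\frac{|g^\pm|^2-1}{|g^\pm|^2+1}$ gives the wrong contact angle on $M^+$. Your formula $1+e_3\cdot\nu=\frac{2|g|^2}{1+|g|^2}$ also does not produce the transmission law.

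With the correct choice, the identity $\sigma_\pm=a_\pm(1+|g^\pm|^2)$ gives $1+e_3\cdot\nu^\pm=2a_\pm/\sigma_\pm$. So $\lambda(1+e_3\cdot\nu^+)=\lambda^{-1}(1+e_3\cdot\nu^-)$ is literally $\sigma_+=\sigma_-$. The inequality follows because $|g^+|^2=\lambda^2(1+s/\Lambda_+)$ and $|g^-|^2=\lambda^{-2}(1+s/\Lambda_-)$ increase with $s$.

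Once these points are repaired, your coincidence argument is correct. It is more careful than the paper's ``integration between two branchings'', since it does not presuppose finitely many one-phase arcs. You should add one detail: writing $\Gamma_\pm=\{y=\eta_\pm(x)\}$, one has $\eta_+'=\eta_-'$ on the whole contact set, because the $C^1$ function $\eta_+-\eta_-\ge0$ attains its minimum there. Then the integrands $\sigma_\pm(1-i\eta_\pm')$ agree also at non-interior contact points, which may form a set of positive measure.

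Finally, $\psi^\pm$ is not close to $a_\pm\bar z$. Since $(\varphi_3^\pm)^2\approx-\frac14$, one has $d\psi^\pm\approx\frac1{4a_\pm}dz+a_\pm\,d\bar z$, which is degenerate when $\lambda=1$. The embedding claim is not needed for the statement anyway.
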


\begin{proof}
We proceed step-by-step using the Weierstrass representation with the data $g^\pm = 2\partial_z v^\pm$ and $f^+ = 1/\lambda$, $f^- = \lambda$. Let $z = x + iy \in B_2 \subset \mathbb{C}$.

\noindent\textbf{Step 1: Setting up the Weierstrass Data}
Since $u$ is harmonic in $\Omega_\pm = \{\pm u > 0\}$, the scaled functions $v^\pm$ are also harmonic in their respective domains. We define the holomorphic functions $g^\pm$ as the complex gradients of $v^\pm$:
\[ 
g^\pm = 2\partial_z v^\pm = v_x^\pm - i v_y^\pm 
\]
Notice that $|g^\pm| = |\nabla v^\pm|$. We define the analytic functions $f^\pm$ as constants:
\[ 
f^+ = \frac{1}{\lambda}, \quad f^- = \lambda\,.
\]

\noindent\textbf{Step 2. Constructing the Minimal Surfaces $M^\pm$.}
We construct the surfaces $M^\pm$ via the parameterizations $\Psi^\pm: \overline{\Omega}_\pm \to \mathbb{R}^3$, using the standard Weierstrass-Enneper formulas:
\[ 
\Psi^\pm(z) = \text{Re} \int_{z_0}^z \left( \frac{1}{2}f^\pm(1-(g^\pm)^2), \frac{i}{2}f^\pm(1+(g^\pm)^2), f^\pm g^\pm \right) d\zeta .
\]
We notice that the maps $\Psi^\pm$ are well defined for all $z\in \overline \Omega_\pm$ since, by the flatness assumption, $\overline{\Omega}_\pm$ are simply connected domains.
Because $M^\pm:=\{\Psi^\pm(z)\,:\, z\in \overline{\Omega}_\pm\}$ are defined via Weierstrass data and $g^\pm$ are holomorphic in $\Omega_\pm$, they are automatically minimal surfaces in the interior, which immediately proves $H_{M^\pm} = 0$ on $\Phi^+(\Omega_+)\cup\Phi^-(\Omega_-)$.

For the third coordinate, $x_3^\pm$, we have:
\[ 
x_3^\pm = \text{Re} \int f^\pm g^\pm dz = \text{Re} \int f^\pm (v_x^\pm - i v_y^\pm) (dx + i dy) = f^\pm v^\pm. 
\]
Since $v^\pm = 0$ on the free boundary $\Gamma_+ \cup \Gamma_-$, the third coordinate vanishes there. This proves the inclusion $\partial M^\pm \subset \{x_3=0\}$.\medskip

\noindent\textbf{Step 3. Transformation of the free boundaries.} 
 Let $s^\pm$ be the arc length parameter of $\Gamma^\pm$, and let $\tau^\pm = \tau_1^\pm + i\tau_2^\pm$ be its unit tangent vector. We choose the orientation of $\Gamma_+$ and $\Gamma_-$ in such a way that $\nu^+:=i\tau^+$ is the normal pointing inwards $\Omega_+$, while $\nu^-:=i\tau^-$ is the normal pointing outwards $\Omega_-$.

Since $v^\pm = 0$ on $\Gamma_\pm$, the gradients $\nabla v^\pm:=v^\pm_x+iv^\pm_y$ are normal to $\Gamma_\pm$. Thus, $\nabla v^+ = i|\nabla v^+|\tau^+$ and $\nabla v^- = i|\nabla v^-|\tau^-$. In complex notation:
\begin{align*}
    g^\pm &= \overline{\nabla v^\pm} = -i|\nabla v^\pm|\overline{\tau^\pm} \implies (g^\pm)^2 = -|\nabla v^\pm|^2\overline{\tau^\pm}^2
\end{align*}

We shall denote $\Psi^\pm_{1,2}:=\psi_1^\pm+i\psi_2^\pm$, that is 
\[
\Psi^\pm_{1,2} =\overline{\int_{z_0}^z \frac{f^\pm}2}-\int_{z_0}^z \frac{1}2f^\pm( g^\pm)^2
\]
Evaluating the differential of the $\Psi_{1,2}^\pm$ along $\Gamma_\pm$ using the identities 
\[
d\Psi^\pm_{1,2} = \frac{1}{2}\overline{f^\pm} d\bar{z} - \frac{1}{2}f^\pm (g^\pm)^2 dz\qquad\text{and}\qquad dz=\tau^\pm ds^\pm
\] 
along $\Gamma$ we get:
\begin{align*}
    d\Psi^+_{1,2} &= \frac{1}{2\lambda}\overline{\tau^+} ds^+ - \frac{1}{2\lambda}(-|\nabla v^+|^2\overline{\tau^+}^2)\tau^+ ds^+ = \frac{\overline{\tau^+}}{2\lambda}(1 + |\nabla v^+|^2) ds^+\,, \\
    d\Psi^-_{1,2} &= \frac{\lambda}{2}\overline{\tau^-} ds^- - \frac{\lambda}{2}(-|\nabla v^-|^2\overline{\tau^-}^2)\tau^- ds^- = \frac{\overline{\tau^-}}{2}\lambda(1 + |\nabla v^-|^2) ds^-\,.
\end{align*}

For \eqref{eq:TPcoincidence} to hold, the tangent parts of the form $d\Psi^\pm_{1,2}$ must be identical on $\Gamma^+\cap \Gamma^-$ and must have the same integral on $\Gamma^+\setminus \Gamma^-$ and $\Gamma^-\setminus \Gamma^+$ in between any two branchings. On $\Gamma^+\cap \Gamma^-$ the differentials yields:
\[ 
\frac{1}{2\lambda}(1 + |\nabla v^+|^2) = \frac{\lambda}{2}(1 + |\nabla v^-|^2) \iff |\nabla v^+|^2 - \lambda^2 |\nabla v^-|^2 = \lambda^2 - 1. 
\]
Substituting $v^+ = \Lambda_-^{-\sfrac{1}{2}}u^+$ and $v^- = \Lambda_+^{-\sfrac{1}{2}}u^-$ back into the equation:
\[ 
\Lambda_-^{-1}|\nabla u^+|^2 - \lambda^2 \Lambda_+^{-1}|\nabla u^-|^2 = \lambda^2 - 1. 
\]
Multiplying by $\Lambda_-$ and using $\lambda^2 = \Lambda_+ / \Lambda_-$ gives the exact two-phase free boundary condition:
\[ 
|\nabla u^+|^2 - |\nabla u^-|^2 = \Lambda_+ - \Lambda_-\,. 
\]

On the remaining region we have, using the same computations as above and the one-phase free-boundary conditions $|\nabla v^\pm|^2=\lambda^{\pm2}$:
\[
    d\Psi^\pm_{1,2} = \frac{(1 + \lambda^2)}{2\lambda} \bar{\tau}^\pm\,ds^\pm=\frac{(1 + \lambda)}{2\lambda} d\bar{z}\,,
 \]
 which concludes the proof of \eqref{eq:OPspeed} and, by integration between two branchings also of \eqref{eq:TPcoincidence}.

 \medskip

\noindent\textbf{Step 4: The Capillary Transmission Condition.}
By choosing the downward-pointing normal, the vertical component is given by $e_3\cdot\nu^\pm = \frac{1 - |g^\pm|^2}{1 + |g^\pm|^2}$. Since $|\nabla v^\pm|^2\geq \lambda^{\pm 2}$, this yields 
$$1 + e_3 \cdot \nu^\pm = \frac{2}{1 + |g^\pm|^2}\leq \frac{2}{1+\lambda^{\pm2}}\,,$$
that is 
\[
e_3 \cdot \nu^\pm\leq \pm \frac{1-\lambda^2}{1+\lambda^2}\quad\text{on}\quad\Gamma^\pm\,.
\]
We verify the matching condition on $\partial M^+ \cap \partial M^-$:
\[ 
\lambda \left( \frac{2}{1 + |g^+|^2} \right) = \frac{1}{\lambda} \left( \frac{2}{1 + |g^-|^2} \right) \implies \frac{1}{\lambda}(1 + |g^+|^2) = \lambda(1 + |g^-|^2).
\]
Since $|g^\pm| = |\nabla v^\pm|$, this is algebraically identical to the metric matching condition we proved in Step 3. Thus, the free boundary condition guarantees the capillary transmission holds.\medskip

\noindent\textbf{Step 5: The Capillary One-Phase Condition.} 
We finally determine the free boundary condition on the one-phase parts of the geometric  free boundary $\partial M^\pm \setminus \partial M^\mp$ as a consequence of the free boundary condition that $u$ satisfies of $\partial\Omega_\pm\setminus\partial\Omega_\mp$. 

On the positive one-phase boundary $\partial \Omega_+ \setminus \partial \Omega_-$, we have $|\nabla u^+| = \Lambda_+^{1/2}$. Recalling our scaling $v^+ = \Lambda_-^{-1/2} u^+$, the gradient magnitude on this boundary becomes:
\[
|g^+| = |\nabla v^+| = \Lambda_-^{-1/2}|\nabla u^+| = \Lambda_-^{-1/2}\Lambda_+^{1/2} = \left(\frac{\Lambda_+}{\Lambda_-}\right)^{1/2} = \lambda\,.
\]
Using the downward-pointing normal formula 
$$e_3\cdot\nu^\pm = \frac{1 - |g^\pm|^2}{1 + |g^\pm|^2},$$ 
we substitute $|g^+| = \lambda$ to find the vertical component of the normal on $\partial M^+ \setminus \partial M^-$:
\[
\nu^+ \cdot e_3 = \frac{1 - \lambda^2}{1 + \lambda^2}\,.
\]

Similarly, on the negative one-phase boundary $\partial \Omega_- \setminus \partial \Omega_+$, the condition is $|\nabla u^-| = \Lambda_-^{1/2}$. Using the scaling $v^- = \Lambda_+^{-1/2} u^-$, the gradient magnitude is:
\[
|g^-| = |\nabla v^-| = \Lambda_+^{-1/2}|\nabla u^-| = \Lambda_+^{-1/2}\Lambda_-^{1/2} = \left(\frac{\Lambda_-}{\Lambda_+}\right)^{1/2} = \frac{1}{\lambda}\,.
\]
Substituting $|g^-| = 1/\lambda$ into the normal formula yields:
\[
\nu^- \cdot e_3 = \frac{1 - (1/\lambda)^2}{1 + (1/\lambda)^2} = \frac{\lambda^2 - 1}{\lambda^2 + 1} = -\frac{1 - \lambda^2}{1 + \lambda^2}.
\]
\end{proof}

\begin{remark}\label{remark:forms}
In the proof of \cref{thm:main-weierstrass} an alternative way to find the functions $\psi_1^\pm$ and $\psi_2^\pm$ is by integrating the closed differential forms
$\alpha_1^\pm:B_1\cap\overline\Omega_\pm\to\R^2$ and $\alpha_2^\pm:B_1\cap\overline\Omega_\pm\to\R^2$ 
defined as
\begin{align*}
    \begin{split}
        {\alpha}_1^{+}& =  \frac1{\lambda}\left(\frac{1}{2}\left(1 - \left( v^{+}_x \right)^2+\left( v^{+}_y \right)^2\right)\,dx-v^{+}_x v^{+}_y\,dy\right), \\
        {\alpha}_1^{-}& =  {\lambda}\left(\frac{1}{2}\left(1 - \left( v^{-}_x \right)^2+\left( v^{-}_y \right)^2\right)\,dx-v^{-}_x v^{-}_y\,dy\right),\\      {\alpha}_2^{+} &  = \frac{1}{\lambda}\left(-v^{+}_x v^{+}_y\,dx+\frac{1}{2}\left(1 + \left( v^{+}_x \right)^2-( v^{+}_y )^2\right)\,dy\right),\\
         {\alpha}_2^{-} &  = \lambda\left(-v^{-}_x v^{-}_y\,dx+\frac{1}{2}\left(1 + \left( v^{-}_x \right)^2-( v^{-}_y )^2\right)\,dy\right).
    \end{split}
\end{align*}
We notice that these are exactly the real parts of the first two components in the Weierstrass representation formula. 
We will exploit this formulation in the appendix in the context of the obstacle problem.
\end{remark}

\section{Nonlinear thin two-membrane problem}\label{s:nonlinear-two-membrane}

We are now going to reparametrize the geometric problem above as a thin-two membrane problem for the area functional with capillary condition on the $x_1x_3$-plane. We start by inverting the $x_1x_3$ coordinates of the maps $\Psi^\pm$.

\begin{lemma}
Suppose $0 \in \partial\Omega_+ \cap \partial\Omega_-$ is a branching point. Let the transformation $T^\pm$ be defined as $T^\pm(x,y) = (\psi_1^\pm(x,y), f^\pm v^\pm(x,y))$, with the notation from \cref{thm:main-weierstrass}. Then, $T^\pm$ is a $C^1$ diffeomorphism onto its image.
\end{lemma}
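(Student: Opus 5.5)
The plan is to compute the Jacobian of $T^\pm$ explicitly from the Weierstrass data and to show that it has a sign. This gives a local diffeomorphism, and injectivity then follows from a monotonicity argument, after possibly shrinking the neighbourhood of the branching point $0$. Recall that $f^\pm$ are constants and $g^\pm=v^\pm_x-iv^\pm_y$. By \cref{remark:forms}, $d\psi_1^\pm=\alpha_1^\pm$, so
\[
\partial_x\psi_1^\pm=\tfrac{f^\pm}{2}\bigl(1-(v^\pm_x)^2+(v^\pm_y)^2\bigr),\qquad \partial_y\psi_1^\pm=-f^\pm v^\pm_xv^\pm_y,
\]
while $d(f^\pm v^\pm)=f^\pm(v^\pm_x\,dx+v^\pm_y\,dy)$. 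Therefore
\[
\det DT^\pm=(f^\pm)^2\Bigl[\tfrac12 v^\pm_y\bigl(1-(v^\pm_x)^2+(v^\pm_y)^2\bigr)+(v^\pm_x)^2v^\pm_y\Bigr]=\tfrac{(f^\pm)^2}{2}\,v^\pm_y\bigl(1+|\nabla v^\pm|^2\bigr).
\]
Since $u\in C^{1,\alpha}(\overline\Omega_\pm)$, the map $T^\pm$ is $C^1$ up to the boundary. It thus suffices to check that $v^\pm_y$ does not vanish on $\overline\Omega_\pm$ near $0$.

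For this I use the flatness. After rescaling around the branching point $0$, we may assume that $u$ is $\eps$-flat as in \cref{thm:main-weierstrass}. The $C^{1,\alpha}$ estimates for flat solutions (the regularity results of \cite{DePhilippisSpolaorVelichkov2021:TwoPhaseBernoulli} quoted in the introduction) give $\nabla u^\pm$ close to $\Lambda_\pm^{1/2}e_2$ in $C^0(\overline\Omega_\pm\cap B_1)$. Hence $v^\pm_y\ge c>0$ there, because $u^-$ is the negative part and the sign conventions make $\pm v^\pm$ increase in $y$ on the respective sides. With this, $|\det DT^\pm|>0$ on $\overline\Omega_\pm\cap B_1$, and the inverse function theorem, applied up to the boundary after a $C^1$ extension, shows that $T^\pm$ is a local $C^1$ diffeomorphism.

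Global injectivity on $\overline\Omega_\pm\cap B_1$ is the main step. My plan is to view $T^\pm$ as a small $C^1$ perturbation of the linear map
\[
L^\pm(x,y)=\Bigl(\tfrac{f^\pm}{2}\bigl(1+(\lambda^{\pm1})^2\bigr)x,\ f^\pm\lambda^{\pm1}y\Bigr),
\]
which is the map obtained from the flat profile. Here $|\nabla v^\pm|=\lambda^{\pm1}$ for the flat profile, so $\partial_x\psi_1^\pm\approx\frac{f^\pm}{2}(1+\lambda^{\pm2})$ and $\partial_y\psi_1^\pm\approx0$.

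To prove injectivity, I take two points $p,q\in\overline\Omega_\pm\cap B_1$. The difficulty is that the domain need not be convex, so one cannot simply integrate $DT^\pm$ along the segment $[p,q]$. Instead I use the structure of the domain. By flatness, $\Omega_+\cap B_1=\{y>h_+(x)\}$ for a $C^{1,\alpha}$ function $h_+$ with small Lipschitz constant, and similarly for $\Omega_-$. Consequently, each vertical line meets $\overline\Omega_\pm$ in a segment, and on that segment $f^\pm v^\pm$ is strictly monotone in $y$. Reparametrizing by $(x,t)$ with $t=f^\pm v^\pm$, the first component satisfies
\[
\partial_x\psi_1^\pm\big|_{t}=\frac{\det DT^\pm}{f^\pm v^\pm_y}>0,
\]
so $\psi_1^\pm$ is strictly monotone in $x$ on each level set $\{v^\pm=t\}$. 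Each such level set is a connected graph over an interval in $x$, by the flatness and the implicit function theorem. It follows that $T^\pm$ is injective, and hence a $C^1$ diffeomorphism onto its image.
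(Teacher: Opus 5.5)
Your Jacobian computation $\det DT^\pm=\frac12(f^\pm)^2v^\pm_y(1+|\nabla v^\pm|^2)$ and the reduction to $v^\pm_y\neq0$ are exactly the paper's argument. The paper stops at that point. From $u^\pm\in C^{1,\alpha}(\overline\Omega_\pm)$ and $|\nabla u^\pm(0)|^2=\Lambda^\pm$ it gets a nonvanishing Jacobian on some small neighbourhood of $0$ (it writes ``nonnegative'', but nonzero is what is meant), and leaves injectivity there to the inverse function theorem.

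What you add goes beyond the paper. You use $\eps$-flatness to get $|v^\pm_y|\ge c$ on all of $\overline\Omega_\pm\cap B_1$. You then prove injectivity from the fact that along a level curve of $v^\pm$ one has $\frac{d}{dx}\psi_1^\pm=\frac{f^\pm}{2}(1+|\nabla v^\pm|^2)>0$. This gives a diffeomorphism on a region of definite size, which is convenient for defining $w^\pm=\psi_2^\pm\circ(T^\pm)^{-1}$ on a fixed half-disc in the next proposition. The paper is content with an unspecified neighbourhood of $0$.

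Two small corrections:
\begin{enumerate}
\item The sentence ``$\pm v^\pm$ increase in $y$'' contradicts your $v^\pm_y\ge c>0$. With the paper's convention $v^-\le0$ on $\Omega_-$, both $v^\pm_y>0$. This is harmless, since only $|v^\pm_y|>0$ is used and the level-curve derivative above does not depend on the sign.
\item The claim that every level set of $v^\pm$ in $\overline\Omega_\pm\cap B_1$ is a graph over an interval needs a justification near $\partial B_1$. A nearly horizontal level curve could a priori leave and re-enter the disc near its top; interior $C^2$ bounds for $v^\pm$ rule this out. Alternatively, you can bypass the issue by working on a box $\{|x|<r,\ |v^\pm|<\delta\}$, where each level curve is a graph over all of $(-r,r)$.
\end{enumerate}
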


\begin{proof}
We prove this by showing the transformation has a non-vanishing Jacobian determinant. By definition, the map is $T^\pm(x,y) = (s, t) = (\psi_1^\pm(x,y), f^\pm v^\pm(x,y))$.

From the Weierstrass representation, the differential of the horizontal components is:
\[
d(\psi_1^\pm + i\psi_2^\pm) = \frac{1}{2}f^\pm d\bar{z} - \frac{1}{2}f^\pm (g^\pm)^2 dz
\]
Substituting $g^\pm = v_x^\pm - i v_y^\pm$, $dz = dx + i dy$, and $d\bar{z} = dx - i dy$, we can isolate the real part to find the gradients of $\psi_1^\pm$:
\begin{align*}
\partial_x \psi_1^\pm &= \frac{1}{2} f^\pm (1 - (v_x^\pm)^2 + (v_y^\pm)^2) \\
\partial_y \psi_1^\pm &= -f^\pm v_x^\pm v_y^\pm
\end{align*}

The Jacobian matrix of $T^\pm$ is given by:
\[
DT^\pm = \begin{pmatrix} 
\partial_x \psi_1^\pm & \partial_y \psi_1^\pm \\ 
\partial_x (f^\pm v^\pm) & \partial_y (f^\pm v^\pm) 
\end{pmatrix} 
= \begin{pmatrix} 
\frac{1}{2} f^\pm (1 - (v_x^\pm)^2 + (v_y^\pm)^2) & -f^\pm v_x^\pm v_y^\pm \\ 
f^\pm v_x^\pm & f^\pm v_y^\pm 
\end{pmatrix}
\]
Calculating the determinant $J^\pm = \det(DT^\pm)$:
\begin{align*}
J^\pm &= \frac{1}{2} (f^\pm)^2 v_y^\pm (1 - (v_x^\pm)^2 + (v_y^\pm)^2) - (-f^\pm v_x^\pm v_y^\pm)(f^\pm v_x^\pm) \\
&= \frac{1}{2} (f^\pm)^2 v_y^\pm [1 - (v_x^\pm)^2 + (v_y^\pm)^2 + 2(v_x^\pm)^2] \\
&= \frac{1}{2} (f^\pm)^2 v_y^\pm (1 + |\nabla v^\pm|^2)\,.
\end{align*}

Since $u^\pm\in C^{1,\alpha}(\overline \Omega_\pm)$ and $|\nabla u^\pm|^2(0)=\Lambda^\pm>0$, the Jacobian is nonnegative in a neighborhood of $0$ and we are done.

\end{proof}

Next we derive the desired thin-two membrane problem

\begin{proposition}
Let $0 \in \partial\Omega_+ \cap \partial\Omega_-$ be a branching point, and let $T^\pm$ be the $C^1$ diffeomorphisms defined by $T^\pm(x,y) = (\psi_1^\pm(x,y), f^\pm v^\pm(x,y))$. Define the functions $w^\pm(s, t) = \psi_2^\pm \circ (T^\pm)^{-1}(s,t)$. Then $w^\pm$ are a solution of the following nonlinear thin two-membrane problem:
\[
\begin{cases}
    \nabla \cdot \left( \frac{\nabla w^\pm}{\sqrt{1 + |\nabla w^\pm|^2}} \right) = 0 & \text{in } \{t \neq 0\}, \\
    w^+ \leq w^- & \text{on } \{t = 0\}, \\
    \frac{-\partial_t w^\pm}{\sqrt{1 + |\nabla w^\pm|^2}} = \pm \frac{1-\lambda^2}{1+\lambda^2} & \text{on } \{t=0\} \cap \{w^+ > w^-\}\\
    \frac{-\partial_t w^\pm}{\sqrt{1 + |\nabla w^\pm|^2}} \leq \pm \frac{1-\lambda^2}{1+\lambda^2}& \text{on } \{t=0\} \cap \{w^+ = w^-\}
\end{cases}
\]
\end{proposition}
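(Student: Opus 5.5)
The plan is to read off each line of the system from the geometric statement in \cref{thm:main-weierstrass}. By the preceding lemma, the maps $T^\pm$ straighten the coordinates $(x_1,x_3)$. Hence $M^\pm$ near the origin is the graph $x_2=w^\pm(x_1,x_3)$ over a region of the $(s,t)=(x_1,x_3)$ plane. By item (1) of \cref{thm:main-weierstrass}, $M^+$ lies in $\{t\ge0\}$ and $M^-$ in $\{t\le 0\}$. Both boundaries $\partial M^\pm$ lie on $\{t=0\}$, and each is the graph of $w^\pm(\cdot,0)$ over the $s$-axis.

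\textbf{Interior equation.} Since $\Psi^\pm$ is a conformal minimal immersion, $H_{M^\pm}=0$. Writing $M^\pm$ as a graph in the $x_2$ direction, minimality is exactly the minimal surface equation $\nabla\cdot(\nabla w^\pm/\sqrt{1+|\nabla w^\pm|^2})=0$ in $\{\pm t>0\}$. This uses only the rotation invariance of the mean curvature.

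\textbf{Ordering $w^+\le w^-$ on $\{t=0\}$.} Fix $s$. On $\Gamma_+\cap\Gamma_-$ we have $\Psi^+=\Psi^-$ by \eqref{eq:TPcoincidence}, so $w^+=w^-$ there. On a one-phase arc, \eqref{eq:OPspeed} gives $d\psi_1^\pm=c\,dx$ and $d\psi_2^\pm=-c\,dy$, with $c=\frac{1+\lambda}{2\lambda}>0$. So, up to a constant, $\Psi^\pm_{1,2}$ is the reflection $z\mapsto c\bar z$ of the free boundary curve, and the two curves $\partial M^\pm$ are reflected copies of $\Gamma_\pm$. They are glued on the coincidence set, since the integration constants match through the branching points by \eqref{eq:TPcoincidence}. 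In the flat configuration $\Omega_+$ lies above $\Omega_-$, so on a gap $\Gamma_+$ lies above $\Gamma_-$. After the reflection $y\mapsto -y$ this ordering flips, which gives $w^+\le w^-$, with strict inequality exactly on the images of the one-phase arcs. This step is the main obstacle. It requires that the $\psi_1$-parametrizations of the two boundary curves are compatible, so that the same $s$ corresponds to the same abscissa up to the common scaling $c$. On one-phase arcs this follows from $d\psi_1^\pm=c\,dx$. On the coincidence set it follows from $\Psi^+=\Psi^-$, so that the two pieces match continuously. The orientation conventions for $\nu^\pm$ from Step 3 must also be tracked carefully to get the sign right.

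\textbf{Boundary conditions.} For the graph $x_2=w(x_1,x_3)$, the unit normal is $\pm(-\partial_s w,1,-\partial_t w)/\sqrt{1+|\nabla w|^2}$. Its $e_3$-component is therefore $\mp\partial_t w/\sqrt{1+|\nabla w|^2}$. We choose the sign that agrees with the downward-pointing normal used in Step 4 of \cref{thm:main-weierstrass}; this is checked at the flat configuration $g^\pm\approx\lambda^{\pm1}$. With that choice, item (2) of \cref{thm:main-weierstrass} translates directly into the stated conditions:
\begin{itemize}
\item on $\{w^+>w^-\}$, which corresponds to the one-phase part $\partial M^\pm\setminus\partial M^\mp$, the equality $\nu^\pm\cdot e_3=\pm\frac{1-\lambda^2}{1+\lambda^2}$ gives the equality condition;
\item on $\{w^+=w^-\}$, the inequality $\nu^\pm\cdot e_3\le\pm\frac{1-\lambda^2}{1+\lambda^2}$ gives the inequality condition.
\end{itemize}
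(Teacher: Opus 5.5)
Your proposal is correct and takes essentially the paper's route. The minimal surface equation and the capillary conditions are read off from \cref{thm:main-weierstrass} by writing $M^\pm$ as graphs over the $x_1x_3$-plane, after the normal-sign check you describe. The ordering comes from integrating \eqref{eq:OPspeed} along each gap with a common constant fixed at a coincidence point by \eqref{eq:TPcoincidence}, which gives $w^+-w^-=-c(\eta_+-\eta_-)\le 0$ at the same abscissa. Your gap-by-gap matching is slightly more careful than the paper's formula, which tacitly treats $s$ as proportional to $x$ even on the coincidence set, where the speed is $\frac{1+|\nabla v^+|^2}{2\lambda}$ rather than $c$.

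Two typos are inherited from the statement. First, by your own ordering step the one-phase set must be $\{w^+<w^-\}$, not the statement's $\{w^+>w^-\}$, which would make that condition vacuous. Second, the speed constant is $c=\frac{1+\lambda^2}{2\lambda}$, although only $c>0$ is used.
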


\begin{proof} The proof follows trivially from the previous Proposition, with the exception of the ordering condition. Thanks to \cite{SpolaorVelichkov2019:EpiperimetricBernoulli2D,DePhilippisSpolaorVelichkov2021:TwoPhaseBernoulli}, we can assume that up to a rotation of the coordinate system, in a neighborhood $(-r,r)^2$ of a two-phase point $0\in\partial\Omega_u^+\cap\partial\Omega_u^-$,  
the sets $\Omega_u^\pm$ are given by 
\begin{equation}\label{e:definition-eta+-}
\Omega_u^+=\Big\{(x,y)\ :\ y> \eta_+(x)\Big\}\quad\text{and}\quad\Omega_u^-=\Big\{(x,y)\ :\ y< \eta_-(x)\Big\},
\end{equation}
where $\eta_\pm\in C^{1,\alpha}((-r,r))$, $\eta_+(0)=\eta_-(0)=0$, $\eta_+'(0)=\eta_-'(0)=0$ and 
\[
-r<\eta_-\le\eta_+<r\quad\text{in}\quad(-r,r).
\]
Next recall that  on their respective free boundaries, the differentials of $\psi_1^\pm$ and $\psi_2^\pm$ are by \eqref{eq:OPspeed}:
\[
d\psi_1^+ = \frac{1 + \lambda^2}{2\lambda} dx \quad \text{and} \quad d\psi_1^- = \frac{1 + \lambda^2}{2\lambda} dx,
\]
\[
d\psi_2^+ = -\frac{1 + \lambda^2}{2\lambda} dy \quad \text{and} \quad d\psi_2^- = -\frac{1 + \lambda^2}{2\lambda} dy.
\]
Subtracting these equations yields
\[
w^+(s, 0) - w^-(s, 0) = -\frac{1 + \lambda^2}{2\lambda} \Big( \eta_+\left(\frac{1 + \lambda^2}{2\lambda}x\right) - \eta_-\left(\frac{1 + \lambda^2}{2\lambda}x\right) \Big)\leq0\,.
\] 
\end{proof}

\section{Conclusion of the proof}\label{s:conclusion}

\begin{proof}[Proof of \cref{t:main}]
Let $w^-$ be defined on $t \le 0$ and $w^+$ be defined on $t \ge 0$. We reflect $w^-$ across the line $\{t = 0\}$ by defining the even reflection:
$$\widetilde{w}^-(s, t) := w^-(s, -t) \quad \text{for} \quad t \ge 0.$$
Both original functions $w^\pm$ solve the interior minimal surface equation in their respective domains, which we can write in divergence form as:
$$  {\rm div}  (A(\nabla w^\pm)) = 0 \quad \text{where} \quad A(p) = \frac{p}{\sqrt{1+|p|^2}}$$
 Since the minimal surface operator only contains second derivatives and first derivatives squared, it is invariant under the reflection $t \mapsto -t$. Therefore, $\widetilde{w}^-$ also satisfies the minimal surface equation in the upper half-plane:
$$  {\rm div}  (A(\nabla \widetilde{w}^-)) = 0 \quad \text{for } t > 0\,.$$

    We define  $d(s,t) = \widetilde{w}^-(s,t) - w^+(s,t)$. 
By subtracting their equations and applying the fundamental theorem of calculus along the line segment between their gradients, we get:
$$  {\rm div}  \left( \left[ \int_0^1 DA\big(\tau \nabla \widetilde{w}^- + (1-\tau)\nabla w^+\big) \, d\tau \right] \nabla d \right) =  {\rm div}  (A(\nabla \widetilde{w}^-)) -   {\rm div}  (A(\nabla w^+))= 0.$$

Let us call this integral matrix $B(s,t)$. Because $A(p)$ is strictly monotone and smooth, $B(s,t)$ is a uniformly positive-definite matrix for bounded gradients, and we obtain a linear elliptic operator $L$:
$$  {\rm div}  (B(s,t) \nabla d) = 0 \quad \text{in } t > 0\,.$$
On the thin boundary $t=0$ we clearly have 
$$d(s,0) \ge 0\,.$$
Finally let us define $$\partial_{\nu_B} d =e_2 \cdot (B\nabla d)= -\big( A_t(\nabla \widetilde{w}^-) - A_t(\nabla w^+) \big)$$
where $A_t(\nabla w) = \frac{\partial_t w}{\sqrt{1+|\nabla w|^2}}$. This is the natural Neumann condition associated to $B$. Recall our previously derived transmission conditions on the non-coincidence set:
$$\frac{-\partial_t w^+}{\sqrt{1+|\nabla w^+|^2}} = \frac{1-\lambda^2}{1+\lambda^2} \implies A_t(\nabla w^+) = -\frac{1-\lambda^2}{1+\lambda^2}$$
$$\frac{-\partial_t w^-}{\sqrt{1+|\nabla w^-|^2}} = -\frac{1-\lambda^2}{1+\lambda^2}$$

Now we calculate the flux for the reflected function $\widetilde{w}^-$. Because $\widetilde{w}^-(s,t) = w^-(s,-t)$, the chain rule dictates that on the boundary $t=0$, the derivative flips: $\partial_t \widetilde{w}^- = -\partial_t w^-$. Therefore:
$$A_t(\nabla \widetilde{w}^-) = \frac{\partial_t \widetilde{w}^-}{\sqrt{1+|\nabla \widetilde{w}^-|^2}} = \frac{-\partial_t w^-}{\sqrt{1+|\nabla w^-|^2}} = -\frac{1-\lambda^2}{1+\lambda^2}\,,$$
which implies $\partial_{\nu_B} d=0$ where $d>0$. An analogous computations shows that $\partial_{\nu_B} d\geq0$ where $d=0$.

In conclusion w have that $d$ is a solution to
\[
\begin{cases}
    {\rm div} (B \nabla d) = 0 & \text{in } \{t > 0\}\\
    d\geq 0 & \text{on } \{t = 0\}\\
    \nu \cdot (B\nabla d)=0& \text{on } \{t = 0\}\cap \{d>0\}\\
    \nu \cdot (B\nabla d)\geq 0& \text{on } \{t = 0\}\cap \{d=0\}
\end{cases}
\]
and so in particular it is a variational solution to a variable coefficients thin obstacle, that is
\begin{equation}\label{e:variational-inequality-thin}
\int_{B_1^+} B \nabla d \cdot \nabla (d - v) \le 0
\end{equation}
for all functions
\[
v \in H^1(B_1^+), \quad v \ge 0 \text{ on } B_1' \quad \text{and} \quad v = d \text{ on } \partial B_1 \cap \{ t \ge 0 \}\,,
\]
where the matrix $B$ is uniformly elliptic with $C^{0,\alpha}$ coefficients. By \cite[Theorem 1.1]{FerreriSpolaorVelichkov2024:FineStructureThinTwoMembrane2D}, there exists a quasi-conformal homeomorphism $f:\R^2 \to \R^2$ such that the function
\[
h \coloneqq d \circ f
\]
is a solution of the harmonic thin-obstacle problem in a neighborhood of the origin, that is a solution of \eqref{e:variational-inequality-thin} with $B=I$. In particular in the ball neighborhood, the non-contact set of $d$ 
\[
\mathcal C:=\{(s,0)\in B_1'\ :\ d(s,0)>0\}
\]
is composed of a finite number of disjoint open segments, that is branching points are locally finite.
\end{proof}

\section{Counterexample in the case $\Lambda^\pm\in C^\infty\setminus C^\omega$}\label{ss:counter}

Let $u(x,y):=y^+$ and let $f\in C^\infty(\R)\setminus C^\omega(\R)$ be a non positive function. Assume that $f(0)=f^{(k)}(0)=0$, for every $k\geq 1$. Everything in the following is true up to choosing a sufficiently small neighborhood of the origin. Let $E_f=\{(x,y)\in \R^2\,:\, y> f(x)\}$ and consider the map $\Psi_f\colon \overline E_f \to \{y\geq 0\}$ defined by
\[
\Psi(x,y):=(\bar{v}^+(x,y),v^+(x,y))\,,
\]
where $v^+$ is a nonnegative harmonic function in $E_f$ with $v=0$ on $\partial E_f$, and $\bar{v}^+$ is the harmonic conjugate of $v^+$ with $\bar v^+(0,0)=0$. By construction $\Psi_f$ is a conformal map in $E_f$ and $\Psi_f \in C^\infty(\overline{E_f})$. By the Hopf maximum principle, $|\nabla v^+|>0$ on $\partial E_f$, so that we can consider the function $w^+(s,t):=u(\Psi_f^{-1}(s,t))$. Since $f\le 0$, we have that the positivity set $\Omega_+:=\{w^+>0\}=\Psi_f(\{u>0\})\subset \{t>0\}$ is contained in the upper half-plane. It is trivial to verify that $w^+\in C^{\infty}(\overline{\Omega_+})$ and moreover
\[
\begin{cases}
\begin{array}{rl}
    \Delta w^+=0&\text{in }\Omega_+\\
    w=0&\text{on }\partial \Omega_+\\
    |\nabla w|^2=\Lambda^+&\text{on }\partial \Omega_+\,.
\end{array}    
\end{cases}
\]
where $\Lambda^+(s,t):=|\nabla v^+|^{-2}(\Psi_f^{-1})\in C^{\infty}(\{t\geq 0\})$. 

Now repeating the same construction for $-f$ and the set $E_{-f}=\{(x,y)\in \R^2\,:\, y< -f(x)\}$ we obtain a function $w^-$ with positivity set $\Omega_{-}\subset \{y<0\}$ satisfying
\[
\begin{cases}
\begin{array}{rl}
    \Delta w^-=0&\text{in }\Omega_{-}\\
    w^-=0&\text{on }\partial \Omega_{-}\\
    |\nabla w^-|^2=\Lambda^-&\text{on }\partial \Omega_{-}\,.
\end{array}    
\end{cases}
\]
where $\Lambda^-(s,t):=|\nabla v^-|^{-2}(\Psi_{-f}^{-1})(s,t)\in C^{\infty}(\{t<0\})$, with $v^-(x,y)=-v(x,-y)$.
Now observe that by construction $\Lambda^+(s,0)=\Lambda^-(s,0)$. Extending both of them locally to $C^\infty$ functions, we obtain a solution $w=w^++w^-$ to the two phase Bernoulli problem with weights $\Lambda^\pm$. Since for $f\in C^{\infty}\setminus C^\omega$ the set $\{f=0\}$ can be chosen at will, we are done. 

We also notice that by construction, the solution is odd: $w^-(s,y)=-w^+(x,-t)$.

\section{Proof of \cref{t:obst}}

\subsection{Weierstrass formula and the obstacle problem} Let $u\in H^{1}(B_1)$ be a solution to the obstacle problem $\Delta u=\chi_{\{u>0\}}$ in $B_1$ and let $V:=\nabla u$. Since $u\in C^{1,1}(B_1)$, we have that the vector field $V$ is $C^{0,1}$ in $B_1$, $C^\infty$ in the positivity set $\Omega:=\{u>0\}\cap B_1$ and vanishes identically on $B_1\setminus \Omega$. We set $u_x:=\partial_xu$, $u_y:=\partial_yu$, $V_x:=\partial_xV$ and $V_y:=\partial_yV$ and consider the following vectorial counterpart of the Weierstrass differential forms from Remark \ref{remark:forms}:
\begin{align*}
    \begin{split}
        \alpha_1 & \coloneqq \frac{1}{2}\left(1- |V_x|^2 + |V_y|^2 \right)\,dx - (V_x\cdot V_y) \,dy,\\
        \alpha_2 & \coloneqq (V_x\cdot V_y) \,dx - \frac{1}{2} \left(1+ |V_x|^2 - |V_y|^2 \right)\,dy.
    \end{split}
\end{align*}
Using the equation $\Delta u=1$ in $\Omega$, one can easily check that in $\Omega$, $\alpha_1$ and $\alpha_2$ can be written as
\begin{align*}
        \alpha_1  = u_{yy}\,dx - u_{xy} \,dy\qquad\text{and}\qquad
        \alpha_2  = u_{xy} \,dx - u_{xx}\,dy.
\end{align*}
These forms are clearly closed on $\Omega$, but they also turn out to be exact on $\Omega$, without any topological assumptions on $\Omega$. Indeed, it is immediate to check that
$$\alpha_1=d(x-u_x)\quad\text{and}\quad \alpha_2=d(u_y-y)\quad\text{in}\quad\Omega.$$
In particular, the first two components of the Weierstrass transformation (\cref{thm:main-weierstrass}) for the obstacle problem, are simply given by 
$(\psi_1,\psi_2)=(x-u_x,u_y-y)$.
Moreover, the potentials $\psi_1:=x-u_x$ and $\psi_2:=u_y-y$ of $\alpha_1$ and $\alpha_2$ are precisely the conjugate harmonics of $u_x$ and $u_y$. Indeed, a straightforward computation gives the following:
\begin{lemma}\label{l:obstacle-S-and-T}
Let $u\in C^{1,1}(D)$ be a solution to the obstacle problem $\Delta u=\chi_{\{u>0\}}$ in an open set $D\subset\R^2$ and let $\Omega:=\{u>0\}\cap D$. Then, the functions $T:\Omega\to\mathbb{C}$ and $S:\Omega\to\mathbb{C}$ given by 
$$T:=(x-u_x)+iu_y\qquad\text{and}\qquad S:=(u_y-y)+iu_x,$$
 are holomorphic functions on $\Omega$ satisfying the relation $T=iS+z$.
 \end{lemma}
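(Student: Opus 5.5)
We will check the Cauchy--Riemann equations for $T$ and $S$ directly on $\Omega$, and then verify the algebraic identity $T=iS+z$ by expanding both sides.

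\textbf{Regularity.} Since $u\in C^{1,1}(D)$ solves $\Delta u=1$ in the open set $\Omega$, the function $u-\frac{|z|^2}{4}$ is harmonic in $\Omega$. Hence $u\in C^\infty(\Omega)$, and the real and imaginary parts of $T$ and $S$ are smooth there. It therefore suffices to check the Cauchy--Riemann equations pointwise.

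\textbf{Holomorphy of $T$.} Write $T=P+iQ$ with $P=x-u_x$ and $Q=u_y$. We need
\[
P_x=Q_y\qquad\text{and}\qquad P_y=-Q_x .
\]
For the first, $P_x=1-u_{xx}$ and $Q_y=u_{yy}$, and these agree exactly because $u_{xx}+u_{yy}=1$ in $\Omega$. For the second, $P_y=-u_{xy}$ and $Q_x=u_{xy}$, so it holds by symmetry of the Hessian.

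\textbf{Holomorphy of $S$.} Write $S=P'+iQ'$ with $P'=u_y-y$ and $Q'=u_x$. Then
\[
P'_x=u_{xy}=Q'_y,\qquad P'_y=u_{yy}-1=-u_{xx}=-Q'_x,
\]
where the last identity again uses $\Delta u=1$.

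\textbf{The relation $T=iS+z$.} Expanding,
\[
iS+z=i(u_y-y)-u_x+x+iy=(x-u_x)+iu_y=T .
\]
This identity also gives an alternative route: once $S$ is known to be holomorphic, the holomorphy of $T$ follows immediately, since $z$ is holomorphic.

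We expect no step to be a real obstacle. The only point needing care is that the equation $\Delta u=1$ holds classically, and this is guaranteed on the open set $\Omega$ by interior regularity. This is also consistent with the potentials $\psi_1,\psi_2$ of $\alpha_1,\alpha_2$ identified just before the lemma.
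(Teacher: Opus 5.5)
Your proof is correct and is exactly the ``straightforward computation'' the paper has in mind. The Cauchy--Riemann equations for $T$ and $S$ reduce to $\Delta u=1$ on $\Omega$ and the symmetry of the Hessian, and $T=iS+z$ follows by direct expansion; your observation that $u-\frac{|z|^2}{4}$ is harmonic in $\Omega$, so that $u\in C^\infty(\Omega)$, supplies the interior regularity the paper leaves implicit.
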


\subsection{On the $C^{2}$ regularity assumption} In this section we discuss the role of the $C^{2}$ regularity assumption in \cref{t:obst}, which allows to write the obstacle problem as a two-phase problem for one of the partial derivatives. 
\begin{lemma}\label{l:obstacle-smooth-parametrization}
Let $u\in C^{1,1}(D)$ be a solution to the obstacle problem $\Delta u=\chi_\Omega$ in the open set $D\subset\R^d$, where $\Omega:=\{u>0\}$. Suppose that $u\in C^{2}(D\cap\overline\Omega)$ and that $0\in \partial\Omega\cap D$ is a singular point in the first stratum, that is $u(x)=\frac12x_d^2+o(|x|^2)$ for $x\in D$. Then, there are $C^{1}$ functions $\eta_\pm:\R^{d-1}\to\R$, with $\eta_+\ge \eta_-$, such that the following holds in a neighborhood of zero: 
\begin{equation}\label{e:obstacle-existence-of-curves}
\partial\{\pm\partial_du>0\}=\big\{(x',\eta_\pm(x'))\ :\ x'\in\R^{d-1}\big\},
\end{equation}
$$\text{\rm Reg}(\partial\Omega)=\big\{(x',\eta_+(x')),(x',\eta_-(x'))\ :\ x'\in\R^{d-1},\ \eta_-(x')<\eta_+(x')\big\},$$
$$\text{\rm Sing}(\partial\Omega)=\big\{(x',\eta_+(x'))\ :\ x'\in\R^{d-1},\ \eta_-(x')=\eta_+(x')\big\},$$
$$\text{\rm int}(\{u=0\})=\big\{(x',x_d)\ :\ x'\in\R^{d-1},\ \eta_-(x')<x_d<\eta_+(x')\big\},$$
where we use the usual notation
$$\text{\rm Reg}(\partial\Omega)=\Big\{x\in D\cap\partial\Omega\ :\ \text{the Lebesgue density of $\Omega$ at $x$ is $1/2$}\Big\};$$
$$\text{\rm Sing}(\partial\Omega)=\Big\{x\in D\cap\partial\Omega\ :\ \text{the Lebesgue density of $\Omega$ at $x$ is $1$}\Big\}.$$
Vice versa, if there are $C^{1}$ curves $\eta_+\ge \eta_-$ such that \eqref{e:obstacle-existence-of-curves} holds, then $u\in C^2(D\cap\overline\Omega)$.
\end{lemma}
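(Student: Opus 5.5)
The idea is to study the sets $\{\pm\partial_d u>0\}$ through the function $w:=\partial_d u$. By the $C^2$ assumption, $w$ is continuous on $D\cap\overline\Omega$ and vanishes on $D\setminus\Omega$. It is harmonic in $\Omega$, because $\Delta u=1$ there, and $\nabla w=\nabla\partial_d u$ extends continuously up to $\partial\Omega$.

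\textbf{Step 1: the blow-up at $0$.} The hypothesis $u=\frac12x_d^2+o(|x|^2)$, together with $C^2(\overline\Omega)$, gives $D^2u\to e_d\otimes e_d$ as $x\to0$ within $\Omega$. Hence $\nabla w\to e_d$ and
\[
w(x)=x_d+o(|x|)\qquad\text{in } \Omega\cap B_r .
\]
So $\partial_d w\ge\frac12$ on $\overline\Omega\cap B_r$.

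\textbf{Step 2: graph structure.} Consider the segment $t\mapsto(x',t)$ near $0$. Along each open subinterval contained in $\Omega$, $w$ is strictly increasing, and $w=0$ wherever $u=0$. I claim that for fixed $x'$ the set $\{t: u(x',t)=0\}$ is a closed interval, possibly a single point. To see this, note that $u\ge0$ and $u\in C^{1,1}$. On a component interval $(a,b)$ of $\{t:(x',t)\in\Omega\}$ in which both endpoints lie in $\{u=0\}$, we would have $w(a)=w(b)=0$ while $w$ is strictly increasing on $(a,b)$, a contradiction. Hence at most one zero interval occurs in the small box.

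Set $\eta_-(x')=\min$ and $\eta_+(x')=\max$ of that interval. Then $\{w>0\}=\{x_d>\eta_+\}$ and $\{w<0\}=\{x_d<\eta_-\}$.

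For the $C^1$ regularity of $\eta_+$, apply the implicit function theorem to a $C^1$ extension of $w|_{\overline{\{x_d\ge\eta_+\}}}$. Such an extension exists (Whitney) since $w\in C^1$ up to the boundary. Using $\partial_dw\ge\frac12$, this shows that $\{x_d=\eta_+\}$ is a $C^1$ graph. Existence of $\eta_\pm$ near every $x'$ follows from the flatness in Step 1, which guarantees $\Omega\supset\{|x_d|>\delta|x'|\}$ at small scales. This is the step I expect to be delicate: one must make sure the domain $\{x_d>\eta_+\}$ is really where $w$ is $C^1$ up to the boundary, so that the implicit function theorem applies rather than only a Lipschitz graph result.

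\textbf{Step 3: identifying the three sets.} Where $\eta_-<\eta_+$, the zero set has interior on one side. The density of $\Omega$ there is $1/2$, since $\Omega$ lies on one side of a $C^1$ graph locally. Where $\eta_-=\eta_+$, the zero set is a graph of measure zero locally, so the density is $1$. The description of ${\rm int}(\{u=0\})$ then follows. There is one subtlety: points with $\eta_-<\eta_+$ but lying in the closure of the contact points would have density $1/2$ by one-sidedness, which is consistent with the definitions.

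\textbf{Converse.} Assume $\partial\{\pm\partial_du>0\}$ are $C^1$ graphs. Then $\partial_d u$ solves a two-phase problem: it is harmonic on the open sets $\{x_d>\eta_+\}$ and $\{x_d<\eta_-\}$ and vanishes on their $C^1$ boundaries. The tangential derivatives of $u$ are controlled analogously, since $\partial_i u$ also vanishes there. Boundary regularity for harmonic functions in $C^{1}$ (or Dini) domains then gives $\nabla\partial_i u$ continuous up to the boundary, that is $u\in C^2(\overline\Omega)$. This needs $C^{1,\text{Dini}}$ in general, so I would also bootstrap the regularity of $\eta_\pm$ to $C^{1,\alpha}$ via $C^{1,1}$ bounds.
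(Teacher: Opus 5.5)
\textbf{There is a genuine gap in your Step 2.} You define $\eta_\pm(x')$ as the endpoints of $\{t:\ u(x',t)=0\}$. Your connectedness argument for this set is fine, but the set can be \emph{empty}, and the flatness from Step 1 does not rule this out. The inclusion $\Omega\supset\{|x_d|>\delta|x'|\}$ only puts the contact set inside a thin double cone; it does not make the contact set meet every vertical line.

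For a concrete example in the plane, take $u=\tfrac12x_2^2+\varepsilon\,\mathrm{Re}(z^4)$ with $\varepsilon>0$ small. Then:
\begin{itemize}
\item $\Delta u=1$ and $u\ge\tfrac14x_2^2+\varepsilon x_1^4$ near $0$, so $\{u=0\}=\{0\}$ locally;
\item $u$ is smooth, with blow-up $\tfrac12x_2^2$ at $0$;
\item no line $\{x_1=c\}$ with $c\neq0$ touches the contact set.
\end{itemize}
Yet $\partial_2u=x_2(1-12\varepsilon x_1^2+4\varepsilon x_2^2)$, so $\partial\{\pm\partial_2u>0\}=\{x_2=0\}$ is a perfectly good graph.

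This is why the paper defines $\eta_\pm$ through $w=\partial_du$ instead. By Step 1, $w(x',\ell)>0>w(x',-\ell)$ for $|x'|<r$, so the zero set of $t\mapsto w(x',t)$ is nonempty by the intermediate value theorem. The same monotonicity argument you used (leaving $\Omega$ forces $w=0$, which contradicts strict increase) shows this zero set is a closed interval $[\eta_-(x'),\eta_+(x')]$. When the line misses $\{u=0\}$, the interval is a single transversal zero of $w$ inside $\Omega$. With this definition, a point with $\eta_-=\eta_+$ may lie in $\Omega$ (the whole axis in the example). So the stated description of $\mathrm{Sing}(\partial\Omega)$ is only correct after intersecting with $\partial\Omega$.

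\textbf{Step 3 needs repair.} Your claim that ``where $\eta_-=\eta_+$ the zero set is a graph of measure zero locally'' fails exactly at branching points, where components of $\mathrm{int}\{u=0\}$ accumulate. The correct reason for density $1$ is as follows. The function $\eta_+-\eta_-\ge0$ is $C^1$ and vanishes at $x'$, hence is $o(|y'-x'|)$. Therefore, in $B_\rho$ the contact set lies in an $o(\rho)$-neighbourhood of a hyperplane and has density $0$.

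\textbf{The converse needs repair too.}
\begin{itemize}
\item For $i<d$, $\partial_iu$ does \emph{not} vanish on the parts of the graphs lying in $\Omega$: in the example, $\partial_1u=4\varepsilon x_1^3$ on $\{x_2=0\}$. Moreover, near a singular point $\Omega$ is not a one-sided $C^1$ domain. So the tangential derivatives cannot be handled as harmonic functions vanishing on a $C^1$ boundary.
\item In the plane this is unnecessary anyway: $\partial_{12}u$ and $\partial_{22}u$ come from $\nabla\partial_2u$, and $\partial_{11}u=1-\partial_{22}u$ in $\Omega$.
\item $C^{1,1}$ bounds on $u$ do not upgrade $\eta_\pm$ beyond $C^1$. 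The paper instead imports $C^{1,\log}$-type regularity from obstacle-problem theory, to get a modulus strong enough for boundary $C^1$ estimates of harmonic functions.
\end{itemize}

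\textbf{On the step you flagged as delicate.} For all unit $e$ in a small cone around $e_d$, you have $\partial_e\partial_du\ge\tfrac14$ in $\Omega\cap B_r$. Your one-dimensional argument, run along lines in direction $e$, then shows $\{w>0\}$ is invariant under translations by that cone. Hence $\eta_+$ is Lipschitz. A first-order expansion of $w$ along short paths in $\overline{\{w>0\}}$ then gives $\nabla\eta_+=-\nabla'w/\partial_dw$, which is continuous. No Whitney extension is needed.
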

\begin{proof}
Set $C_{r,\ell}:=B_r'\times(-\ell,\ell)$. Since $\{\partial_du\neq 0\}\subset\Omega$, we know that $\partial_{dd}u>0$ in $C_{r,\ell}\cap\{\partial_du\neq 0\}$ for some small $r>0$. We can choose $\ell$ and then $r$ small enough such that $\partial_du(x',\ell)>0$ and $\partial_du(x',\ell)<0$ for all $x'\in B_r'$. By the monotonicity of $t\mapsto \partial_du(x',t)$, there are $\eta_+(x')\ge\eta_-(x')$ in $(-\ell,\ell)$ such that: $\partial_du(x',t)>0$ for $t\in(\eta_+(x'),\ell)$; $\partial_du(x',t)<0$ for $t\in(-\ell,\eta_-(x'))$; $\partial_du(x',t)=0$ for $t\in[\eta_-(x'),\eta_+(x')]$. Now, if $\eta_+(x')>\eta_-(x')$, then $\eta_\pm(x')$ are points in the regular part of the free boundary $\partial\Omega$. Thus, $\eta_\pm$ are $C^\infty$ smooth functions in a neighborhood of $x'$. If $\eta_+(x')=\eta_-(x')$, then $\Omega$ has Lebesgue density $1$ at $(x',\eta_\pm(x'))$. Finally, the $C^{1}$ regularity of $\eta_\pm$ follows from the $C^{1}$ regularity of $\partial_du$ on $\overline{\{\pm\partial_du>0\}}$. The last claim follows since thanks to the regularity of the obstacle problem $\eta_\pm$ are $C^{1,\theta}$ regular for some logarithmic modulus of continuity $\theta$, so the conclusion is a consequence of the classical boundary regularity for harmonic functions.
\end{proof}

\begin{remark}\label{rmk:cash} Notice that \cite{CaSh} Section 3, (f) case 1, shows that, in dimension $2$, $\overline{\{\pm\partial_du>0\}}$ are locally, at non isolated singular points of the free boundary of $u$, $C^1$ graphs. Thus, in view of the last sentence in the previous Lemma, the $C^2$ regularity assumption in $\overline{\Omega}$ of Theorem \ref{t:obst} is justified. 
\end{remark}

\subsection{The free boundary condition on $\text{\rm Reg}(\partial\Omega)$} In this subsection we show that the partial derivatives of the solutions to the obstacle problem solve a Bernoulli one-phase problem with an anisotropic  condition on the free boundary (for more on this class of problems see  \cite{BevilacquaCarducci}).

\begin{lemma}\label{lemma:obstacle-boundary-condition}
Let $u$ be a solution to the obstacle problem $\Delta u=\chi_{\{u>0\}}$ in an open set $D\subset\R^d$, $d\ge 2$. Suppose that $\Gamma\subset\partial\{u>0\}$ is a smooth surface contained in the regular part of the free boundary and that $\nu$ is the normal to $\Gamma$ pointing inwards $\Omega:=\{u>0\}$. Suppose that $e\in\mathbb S^{d-1}$ is a vector with $e\cdot\nu>0$ on $\Gamma$. Then, the derivative $u_e:=e\cdot\nabla u$ is $C^1$ regular up to the boundary $\Gamma$ and 
$$|\nabla u_e|^2=e\cdot\nabla u_e\quad\text{on}\quad\Gamma.$$ 
\end{lemma}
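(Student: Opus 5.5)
Near a regular free boundary point the Hessian has a clean structure, and the identity follows from it directly. The plan has three steps.

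\emph{Regularity.} By the classical regularity theory for the obstacle problem, near the smooth regular portion $\Gamma$ the solution $u$ is $C^\infty$ up to $\Gamma$ from the side of $\Omega$. Indeed, $u$ solves $\Delta u=1$ in $\Omega$ with $u=|\nabla u|=0$ on the smooth surface $\Gamma$, so boundary Schauder estimates apply. In particular $u_e\in C^1(\overline\Omega)$ near $\Gamma$, which gives the first claim.

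\emph{Structure of the Hessian on $\Gamma$.} Since $u\equiv 0$ and $\nabla u\equiv 0$ on $\Gamma$, every tangential derivative of $\nabla u$ vanishes there. Hence $D^2u\,\tau=0$ for every tangent vector $\tau$ to $\Gamma$, so $D^2u$ annihilates $T\Gamma$. Being symmetric, $D^2u$ on $\Gamma$ must then be a multiple of $\nu\otimes\nu$, say $D^2u=a\,\nu\otimes\nu$. Taking the trace and using $\Delta u=1$ up to the boundary gives $a=1$, so
\[
D^2u=\nu\otimes\nu\quad\text{on }\Gamma.
\]

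\emph{Computation.} From this,
\[
\nabla u_e=D^2u\,e=(e\cdot\nu)\,\nu\quad\text{on }\Gamma.
\]
Therefore $|\nabla u_e|^2=(e\cdot\nu)^2$ and $e\cdot\nabla u_e=(e\cdot\nu)(e\cdot\nu)=(e\cdot\nu)^2$, and the two sides agree. The hypothesis $e\cdot\nu>0$ is not needed for the identity itself. Its role is to guarantee that $\{u_e>0\}$ lies on the $\Omega$ side, so that $\Gamma$ is a genuine one-phase free boundary for $u_e$ with $|\nabla u_e|=e\cdot\nu>0$.

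The only step requiring care is the regularity up to $\Gamma$, which must be quoted from the boundary regularity theory for the obstacle problem; the rest is algebra.
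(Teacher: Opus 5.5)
Your proof is correct and follows essentially the same route as the paper's: both establish $D^2u=\nu\otimes\nu$ on $\Gamma$ and then compute $\nabla u_e=D^2u\,e=(e\cdot\nu)\nu$, giving $|\nabla u_e|^2=(e\cdot\nu)^2=e\cdot\nabla u_e$. The one difference is how the Hessian is obtained: the paper reads it off the blow-up $\frac12(x\cdot\nu)_+^2$ at regular points, whereas you derive it directly from $\nabla u\equiv 0$ on $\Gamma$, the symmetry of $D^2u$, and $\Delta u=1$ up to the boundary, which is slightly more self-contained.
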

\begin{proof}
Let $0\in\Gamma\cap D$ be a regular point of $\partial\Omega$. Then, the blow-up $u_0$ of $u$ in $0$ has the form
$$u_0(x):=\frac12(x\cdot \nu)_+^2,$$
where $\nu$ is the normal to $\Gamma$ pointing inwards $\Omega$. Since $u\in C^\infty(\Gamma\cup\Omega)$ in a neighborhood of $0$ and since $\nabla^2u(0)=\nu\otimes\nu$, we can compute 
$$\nabla u_e\cdot\nu=\nabla (\nabla u\cdot e)\cdot \nu=e\cdot\nabla^2u[\nu]=e\cdot(\nu\otimes\nu)[\nu]=e\cdot\nu,$$
which shows that $\nabla u_e(0)=(e\cdot\nu)\nu$. Similarly
$$\nabla u_e\cdot e=\nabla (\nabla u\cdot e)\cdot e=e\cdot\nabla^2u[e]=e\cdot(\nu\otimes\nu)[e]=(e\cdot\nu)^2,$$
which gives precisely $|\nabla u_e|^2=e\cdot\nabla u_e.$
\end{proof}

\subsection{Proof of \cref{t:obst}}
Let $0\in \mathcal B(u)$. Then, since the unique blow up at any branching point, up to a rotation, is given by $\frac12y^2$, by our regularity assumption we can assume that $u_{yy}(0)=1$ and $u_{xy}(0)=0=u_{xx}(0)$. Thanks to Lemma \ref{l:obstacle-smooth-parametrization}, $\partial\{\pm u_y>0\}$ are $C^{1}$ regular graphs over $\{y=0\}$. Let $T$ and $S$ be the maps from Lemma \ref{l:obstacle-S-and-T}. Consider the maps 
$$T_\pm:\overline{\{\pm u_y>0\}}\to\R^2\ ,\quad T_\pm(x,y)=T(x,y)=(x-u_x,u_y),$$
which are $C^{1}$ regular and invertible in a neighborhood of $0$, and let $T_\pm^{-1}:\overline{B_r^\pm}\to\R^2$ be their inverse maps. Consider the holomorphic maps $S(T_\pm^{-1})$ and let $w^\pm:=\text{Re}(S(T_\pm^{-1}))$. Observe that, as composition of holomorphic functions, $\Delta w^\pm=0$ in $B_r^\pm$.

We will show that $w^\pm$ solve a two-membrane problem in $B_r$ and that the branching points of $\Omega$ correspond precisely to the points on the boundary of the set $\{(s,0)\ :\ w^+(s,0)<w^-(s,0)\}$. 

Notice that $T_\pm(x,y)=(x-u_x,0)$ for every $(x,y)\in \partial\{\pm u_y>0\}$ and set 
$$\Gamma_o^\pm:=\partial\{ \pm u_y>0\}\setminus \partial\{\mp u_y>0\}\qquad\text{and}\qquad \Gamma_t:=\partial\{u_y>0\}\cap \partial\{- u_y>0\}.$$ 
Given points $(x,y_+)$ and $(x,y_-)$ on $\partial\{u_y\neq0\}$, from Lemma \ref{l:obstacle-smooth-parametrization} if follows that 
\begin{itemize}
\item either $(x,y^\pm)\in \Gamma^\pm_o$, and then $T_\pm(x,y^\pm)=(x,0)$ and $y^+>y^-$; \item or $(x,y^\pm)\in \Gamma_t$, in which case $y^+=y^-$ and so $T_+(x,y)=T_-(x,y)=(x-u_x(x,y),0)$.
\end{itemize}
Since $\text{Re}(S(x,y))=-y$ on $\partial \{\pm u_y>0\}$, we have $w^+\leq w^-$ on $B_r\cap \{(s,t)\,:\,t=0\}$, with strict inequality on $T(\Gamma^\pm_o)$.
Finally, in order to deduce the free boundary condition on $T(\Gamma^\pm_o)$, we differentiate the identity
\[
w^\pm(T_\pm(x,y))=\text{Re}(S_\pm(x,y))=u_y(x,y)-y
\]
in $x$ and in $y$ obtaining 
$$\begin{cases}
\nabla w^\pm(T(x,y)) \cdot (1-u_{xx},u_{yx})=u_{xy},\\
\nabla w^\pm(T(x,y)) \cdot (-u_{xy},u_{yy})=u_{yy}-1.
\end{cases}
$$
Solving the system gives 
\begin{align}\label{eq:chepalle1}
\nabla  w^\pm (T(x,y)) &= \frac{1}{u_{yy}^2 + u_{xy}^2} 
    \begin{pmatrix}
        u_{yy} & -u_{xy} \\
        u_{xy} & u_{yy}
    \end{pmatrix}\begin{pmatrix}u_{xy}\\
 -u_{xx}\end{pmatrix}=
 \frac{1}{|\nabla u_y|^2} 
   \begin{pmatrix}u_{xy}\\
 -u_{yy}\end{pmatrix}+\begin{pmatrix}0\\
 1\end{pmatrix}.
\end{align}
Now, choosing $e=e_2$ in Lemma \ref{lemma:obstacle-boundary-condition} and using our regularity assumption we obtain that 
$$|\nabla u_y|^2=u_{yy}\quad\text{on}\quad \Gamma_o^\pm\,,$$
which combined with \eqref{eq:chepalle1} gives $\partial_t w^\pm=0$ on $T(\Gamma_o^\pm)$. In conclusion, $w^\pm:B_r^\pm\to\R$ solves the following linear thin two-membrane problem
\[
\begin{cases}
    \Delta w^\pm=0& \text{in }\{\pm t>0\}\\
    w^+\leq w^-& \text{on }\{t=0\}\\
    (w^+-w^-)\partial_tw^\pm=0& \text{on }\{t=0\}\,.
\end{cases}
\]
Standard frequency arguments on the linear thin-two membrane problem, combined with the $C^{1}$ regularity of the traces of $w^\pm$ on $\{t=0\}$, yield the desired conclusion.
\qed

\bibliographystyle{plain}
\bibliography{FreeBoundary_bib}

\end{document}